\theoremstyle{thmstyleone}%
\newtheorem{theorem}{Theorem}%  meant for continuous numbers
\newtheorem{lemma}{Lemma}
\newtheorem{corollary}{Corollary}
\theoremstyle{thmstyletwo}%
\newtheorem{remark}{Remark}%
\theoremstyle{thmstylethree}%
\begin{document}

\title[Empirical priors in linear regression with unknown error variance]{High-dimensional properties for empirical priors in linear regression with unknown error variance}

%%=============================================================%%
%% Prefix	-> \pfx{Dr}
%% GivenName	-> \fnm{Joergen W.}
%% Particle	-> \spfx{van der} -> surname prefix
%% FamilyName	-> \sur{Ploeg}
%% Suffix	-> \sfx{IV}
%% NatureName	-> \tanm{Poet Laureate} -> Title after name
%% Degrees	-> \dgr{MSc, PhD}
%% \author*[1,2]{\pfx{Dr} \fnm{Joergen W.} \spfx{van der} \sur{Ploeg} \sfx{IV} \tanm{Poet Laureate} 
%%                 \dgr{MSc, PhD}}\email{iauthor@gmail.com}
%%=============================================================%%

\author*[1]{\fnm{Xiao} \sur{Fang}}\email{xiaofang@ufl.edu}

\author[2]{\fnm{Malay} \sur{Ghosh}}\email{ghoshm@ufl.edu}

\affil[]{\orgdiv{Department of Statistics}, \orgname{University of Florida}, \orgaddress{\street{  Griffin-Floyd Hall}, \city{Gainesville}, \postcode{32611}, \state{FL}, \country{USA}}}

%%==================================%%
%% sample for unstructured abstract %%
%%==================================%%

\abstract{We study full Bayesian procedures for high-dimensional linear regression. We adopt data-dependent empirical priors introduced in  \citep{Martin:2017}. In their paper, these priors have nice posterior contraction properties and are easy to compute. Our paper extend their theoretical results to the case of  unknown error variance . Under proper sparsity assumption, we achieve model selection consistency, posterior contraction rates as well as Bernstein von-Mises theorem by analyzing multivariate t-distribution.}

\keywords{Bernstein von-Mises theorem,
{model selection consistency},
multivariate t-distribution,
posterior contraction rate}

%%\pacs[JEL Classification]{D8, H51}

%%\pacs[MSC Classification]{35A01, 65L10, 65L12, 65L20, 65L70}

\maketitle

\section{Introduction}
In a series of articles, Ryan Martin  and his colleagues introduced empirical priors in sparse high dimensional linear regression models (see for  example,   \citep{Martin:2014}, \citep{Martin:2017}, \citep{Martin:2020} and  \citep{Martin:2020.1}). These priors are all data dependent and achieve nice posterior contraction rates, specifically, concentration of the parameter of  interest around the true value at a very rapid rate. Moreover, these priors are quite satisfactory for both estimation and prediction as pointed out in their  articles.

While  \citep{Martin:2017} introduced priors both when the error variance $\sigma^2$ in a linear regression model is known and unknown, their theoretical results were proved only for known $\sigma^2$. The objective of this paper is to fill in the gap and obtain theoretical properties, namely posterior contraction rates for unknown $\sigma^2$.  The technical novelty of this approach is that  unlike the former, our algebraic manipulations require handling of  multivariate t-distributions rather than   multivariate normal distributions. In addition to the above, we have established model selection
consistency as well as a Bernstein von-Mises theorem in our proposed set up.

The outline of this paper is as follows. We have introduced the model and certain basic lemmas needed for the rest of the paper in Section 2. Posterior concentration as well as  model selection consistency results  are stated in Section 3. Bernstein von-Mises theorem and its application are stated in Section 4. All the proofs are given in Section 5. Some final remarks are made in Section 6.

\section{The Model}
Consider the standard linear regression model 
\begin{equation}
y=X\beta+ \sigma z,
\end{equation}
where $y$ is a $(n \times 1)$ vector of response variables, $X$ is $n \times p$ design matrix, $\beta$ is a $(p \times 1)$ regression parameter, $\sigma >0 $ is an unknown scale parameter and $z \sim N_n(0,I_n)$.

Let $S \subseteq \lbrace 1,2, \cdots, p\rbrace$, $\vert S\vert\le R$, $\vert S\vert$ denotes the cardinality of $S$. $R(\le n \ll p)$ is the rank of $X$. Also let $X_S$ $(n \times \vert S\vert)$ denote a submatrix of the column vectors of $X$ corresponding to the elements of S. It is assumed that $X_S^TX_S$ is nonsingular. The corresponding elements of the regression vector $\beta$ is denoted by $\beta_S$. Also, let $\hat{\beta}_S=(X_S^TX_S)^{-1}X_S^Ty$ denote the least square estimator of $\beta_S$.

\subsection{The prior}
The prior considered is as follows:\\
\\
(i)  $\beta_S\vert S,\sigma^2 \sim N_{\vert S\vert}(\hat{\beta}_{S},\sigma^2 {\gamma}^{-1}(X^T_SX_S)^{-1})$, $\beta_{S^c}$=0 with probability 1.\\
\\
(ii) $\sigma^2 \sim IG(a_0,b_0)$, i.e an inverse gamma distribution with shape and scale parameters  $a_0$ and $b_0$.\\
\\
(iii) Marginal priors for $S$: 
$ \pi(S) = {p \choose {s} }^{-1} f_n(s) $, where $s=\vert S\vert$, and $f_n(s) \propto (cp^a)^{-s}$
for  $s=0,1,\cdots,R$ 
and  $f_n(s)=0$ for $s=R+1,\cdots, p$.\\
\\
Then  the empirical joint prior for $(\beta,\sigma^2)$ is
\begin{equation}
\begin{split}
\Pi_n(\beta,\sigma^2) = &\sum_S \pi(S) N_{\vert S\vert}(\beta_{S}\vert \hat{\beta}_{S},\sigma^2 {\gamma}^{-1}(X^T_SX_S)^{-1}) \delta_{0}( \beta_{S^c}) \\
&\times \exp(-\frac{b_0}{\sigma^2})(\sigma^2)^{-a_0-1}b_0^{a_0}/\Gamma(a_0),
\end{split}
\end{equation}
where $\delta_0$ denotes the Dirac-Delta function.

\subsection{The posterior distribution}
Following \citep{Martin:2017}, we consider also a fractional likelihood 
\begin{displaymath}
L^{\alpha}(\beta, \sigma^2)=(2\pi\sigma^2)^{-\frac{n\alpha}{2}}\exp(-\frac{\alpha}{2\sigma^2} \|y-X\beta \|^2), 0< \alpha < 1.
\end{displaymath}
Then the posterior for $\beta$ conditional on $\sigma^2$ and $S$ is 

\begin{equation}
\begin{split}
&\pi^n(\beta\vert\sigma^2, S) \\ 
\propto & \exp[-\frac{\alpha}{2\sigma^2} \|y-X_S\beta_S-X_{S^c} \beta_{S^c}\|^2-\frac{\gamma}{2\sigma^2}(\beta_S-\hat{\beta}_S)^T X_S^T X_S(\beta_S-\hat{\beta}_S)] \\
 & \cdot (\sigma^2)^{-(\frac{n \alpha}{2}+a_0+1)}\delta_0(\beta_{S^c}).
\end{split}
\end{equation}
Consider the identity 
\begin{displaymath}
\begin{split}
& \alpha \|y-X_S\beta_S-X_{S^c} \beta_{S^c}\|^2+\gamma(\beta_S-\hat{\beta}_S)^T X_S^T X_S(\beta_S-\hat{\beta}_S)\\
= & (\alpha+\gamma)(\beta_S-\hat{\beta}_S)^T X_S^T X_S(\beta_S-\hat{\beta}_S)+\alpha \|y-\hat{y}_S\|^2+ \\
+ &\beta_{S^c}^T X_{S^c}^TX_{S^c} \beta_{S^c}-2\beta^T_{S^c}X_{S^c}^T(y-X_S \beta_S),
\end{split}
\end{displaymath}
where $\hat{y}_S=X_S \hat{\beta}_S$. Now recalling that $\beta_{S^c}$ is concentrated at $0$ with prior probability $1$, it follows that 
\begin{equation}
\begin{split}
\pi^n(\beta\vert\sigma^2, S)&=\pi^n(\beta_S\vert\sigma^2, S) \delta_0(\beta_{S^c}), \\
\pi^n(\beta_S\vert S,\sigma^2)&=N_{\vert S\vert}(\beta_S\vert\hat{\beta}_{S},\frac{\sigma^2}{\alpha+\gamma}(X^T_SX_S)^{-1}).
\end{split}
\end{equation}
Also the conditional posterior for $\sigma^2$ given $S$ is
\begin{equation}
\pi^n(\sigma^2\vert S) \sim IG(a_0+\frac{\alpha n}{2}, b_0 + \frac{\alpha}{2} \Vert y-\hat{y}_S \Vert ^2).
\end{equation}
\noindent
Thus the conditional distribution for $\beta_S$ given $S$ is 
\begin{equation}
\pi^n(\beta_S\vert S)=D^n_S(\beta_S) \frac{(\alpha+ \gamma)^{\vert S\vert/2} \{ b_0 + \frac{\alpha}{2} \Vert y-\hat{y}_S \Vert ^2\}^{-(a_0+\alpha n/{2})}}{\pi(S) \gamma ^{\vert S\vert/2}\Gamma(a_0+\alpha n/2)}, \label{beta_S given S}
\end{equation}
where 
\begin{equation}
D^n_S(\beta_S)=\frac{\pi(S)(2\pi)^{-\frac{\vert S\vert}{2}}\Gamma(\frac{n \alpha}{2} +\frac{\vert S\vert}{2}+a_0)\vert\gamma(X_S^T X_S)\vert^{\frac{1}{2}}}{(\frac{2b_0+(\beta_S- \hat{\beta}_S)^T(\gamma+\alpha)(X_S^T X_S)(\beta_S- \hat{\beta}_S)+\alpha \Vert y-\hat{y}_S \Vert^2}{2})^{\frac{n \alpha}{2} +\frac{\vert S\vert}{2}+a_0}}.
\end{equation}
\noindent
Finally, the  marginal posterior of  $S$  is
\begin{equation}
\pi^n(S) \propto \pi(S)(\frac{\gamma}{\alpha+\gamma})^{\vert S\vert/2} \{ b_0 + \frac{\alpha}{2} \Vert y-\hat{y}_S \Vert ^2\}^{-(a_0+\alpha n/{2})}.
\end{equation}
\begin{remark}
All  our results except Corollary \ref{uncertainty quantification} are obtained for all $0< \alpha < 1 $ and all $\gamma >0$. However, for higher-order  properties, such as credible probability of set, some conditions on $\alpha$ and $\gamma$ are required. For example, in Corollary \ref{uncertainty quantification} related to uncertainty, we assume $\alpha+ \gamma \le 1$. So we can always set 
 $\alpha$   close to $1$, $\gamma$   close to $0$, then the fractional likelihood is almost the same as the normal likelihood, and  $\beta_S\vert S,\sigma^2$ is like non-informative prior.
\end{remark}

\subsection{Tail bounds for the Chi squared distribution}

\begin{lemma}
For any $a>0$, we have
\begin{displaymath}
P(\vert\chi^2_p -p\vert>a) \le 2 \exp(-\frac{a^2}{4p}).
\end{displaymath} \label{lemma1}
\end{lemma}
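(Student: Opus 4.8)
The natural route is the Chernoff (exponential Markov) bound, based on the closed-form moment generating function $E[e^{t\chi^2_p}]=(1-2t)^{-p/2}$, valid for $t<1/2$; the two one-sided deviations are treated separately and the factor $2$ comes from the union bound.

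For the lower tail, fix $t>0$ and apply Markov's inequality to $e^{-t\chi^2_p}$:
\[
P(\chi^2_p\le p-a)\le e^{t(p-a)}\,(1+2t)^{-p/2}.
\]
Taking logarithms and using the elementary inequality $\ln(1+2t)\ge 2t-2t^2$ for $t\ge 0$ (the difference vanishes at $t=0$ and has derivative $8t^2/(1+2t)\ge 0$), the exponent is at most $-at+pt^2$, which at $t=a/(2p)$ equals $-a^2/(4p)$. Hence $P(\chi^2_p-p\le -a)\le e^{-a^2/(4p)}$ for every $a>0$.

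For the upper tail, fix $t\in(0,1/2)$ and apply Markov to $e^{t\chi^2_p}$:
\[
P(\chi^2_p\ge p+a)\le e^{-t(p+a)}\,(1-2t)^{-p/2}.
\]
Now one needs an \emph{upper} bound on $-\ln(1-2t)$; from $-\ln(1-x)-x=\int_0^x \frac{s}{1-s}\,ds\le \frac{x^2}{2(1-x)}$ one gets $-\ln(1-2t)\le 2t+\frac{2t^2}{1-2t}$, so the logarithm of the bound is at most $-at+\frac{pt^2}{1-2t}$. The admissible choice $t=\frac{a}{2(p+a)}\in(0,1/2)$ makes $1-2t=\frac{p}{p+a}$ and gives exponent $-\frac{a^2}{4(p+a)}$, so $P(\chi^2_p-p\ge a)\le e^{-a^2/\{4(p+a)\}}$. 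Summing the two tail estimates yields the stated bound.

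The routine content is the two Markov steps and the one-variable optimizations. The one delicate point — and the main obstacle to establishing the displayed inequality literally for \emph{all} $a>0$ — is the upper tail: because $-\ln(1-2t)$ diverges as $t\uparrow 1/2$ while $2t+2t^2$ stays bounded, the Gaussian-type exponent $a^2/(4p)$ cannot be maintained and one is driven to the sub-gamma exponent $a^2/\{4(p+a)\}$. In the sparse high-dimensional regime invoked later, where the deviations $a$ of interest are of smaller order than $p$, the two exponents agree up to a factor tending to $1$, so the stated form is adequate for the applications. The lower tail, by contrast, delivers exactly $a^2/(4p)$ with no restriction on $a$.
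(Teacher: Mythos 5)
Your Chernoff computations are all correct, and the caveat you raise about the upper tail is exactly the right one to raise: the paper offers no argument for this lemma at all (it simply cites Lemma 4.1 of \citep{Cao:2020}), and the statement as printed, ``for any $a>0$,'' is in fact false. The optimal Chernoff exponent for the upper tail is
\begin{displaymath}
\sup_{0<t<1/2}\Bigl\{t(p+a)+\tfrac{p}{2}\log(1-2t)\Bigr\}
=\tfrac{a}{2}-\tfrac{p}{2}\log\bigl(1+\tfrac{a}{p}\bigr),
\end{displaymath}
which is strictly smaller than $a^2/(4p)$ for every $a>0$ because $r-\log(1+r)<r^2/2$ for $r>0$; and this is not an artifact of the method, since for fixed $p$ the true upper tail decays like $e^{-a/2}$ up to polynomial factors and therefore eventually exceeds $2\exp(-a^2/(4p))$. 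Concretely, for $p=1$ and $a=10$ one has $P(\chi^2_1>11)\approx 9\times 10^{-4}$ while $2e^{-25}\approx 2.8\times 10^{-11}$. So no proof can close the gap between your sub-gamma exponent $a^2/\{4(p+a)\}$ and the claimed Gaussian exponent $a^2/(4p)$ on the upper side; your lower-tail bound $\exp(-a^2/(4p))$, by contrast, is exactly the stated rate with no restriction, and your argument for it (via $\log(1+2t)\ge 2t-2t^2$) is sound.

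The defect is thus in the lemma, not in your proof, and it is harmless where the lemma is used. In the proofs of Theorems \ref{theorem1}--\ref{theorem3} only lower-tail deviations enter: the events $R_{n,S}^c$ and $\tilde R_{n,S}^c$ are deviations below the mean by $a=\frac{1-\alpha}{2}(n-u_S)$, of order $p$, and the constant $\frac{(1-\alpha)^2}{16}$ in (\ref{U_1 part}) is precisely your lower-tail $a^2/(4p)$ (the factor $2$ is not even needed there). In the Bernstein--von Mises proof the two-sided event $Q$ has $a=\sqrt{n-s^{\star}}\log(n-s^{\star})=o(n-s^{\star})$, so $a^2/\{4(p+a)\}=(1+o(1))\,a^2/(4p)$ and your bound is asymptotically equivalent to the claimed one. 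A clean fix for the paper would be to state the two tails separately, or to add a restriction such as $a\le p$, under which $a^2/\{4(p+a)\}\ge a^2/(8p)$ and a constant-adjusted two-sided bound of the stated form holds.
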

\noindent
Proof: See Lemma 4.1 of \citep{Cao:2020}.

\begin{lemma} \noindent \\
(i) For any $c>0$, we have
\begin{displaymath}
P(\chi^2_p(\lambda)-(p+\lambda)>c) \le \exp(-\frac{p}{2} \{\frac{c}{p+\lambda}-log(1+\frac{c}{p+\lambda})\}).
\end{displaymath}
(ii) for $\omega <1$ then
\begin{displaymath}
P(\chi^2_p(\lambda) \le \omega \lambda) \le c_1 \lambda^{-1} \exp \{-\lambda(1-\omega)^2/8\},
\end{displaymath}
where $c_1>0$ is a constant.\\
\\
(iii)
For any $c>0$, $P(\chi^2_p(\lambda)-p \le -c) \le  \exp(-\frac{c^2}{4p})$.\label{lemma2}
\end{lemma}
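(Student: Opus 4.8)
The plan is to treat all three inequalities as Chernoff-type tail bounds derived from the moment generating function of a noncentral chi-squared variable. Using the Poisson mixture representation $\chi^2_p(\lambda)\stackrel{d}{=}\chi^2_{p+2K}$ with $K\sim\mathrm{Poisson}(\lambda/2)$, one has, for $t<1/2$,
\[
E\!\left[e^{t\chi^2_p(\lambda)}\right]=(1-2t)^{-p/2}\exp\!\left(\frac{\lambda t}{1-2t}\right),
\]
and in particular $E[e^{-s\chi^2_p(\lambda)}]=(1+2s)^{-p/2}\exp(-\lambda s/(1+2s))\le(1+2s)^{-p/2}$ for every $s>0$. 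Part (i) is an upper-tail bound and (ii)--(iii) are lower-tail bounds; in each I would apply Markov's inequality to $e^{t\chi^2_p(\lambda)}$ or $e^{-s\chi^2_p(\lambda)}$ and then choose the free parameter.

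For (i) I would write the threshold as $p+\lambda+c=(p+\lambda)(1+r)$ with $r=c/(p+\lambda)>0$; Markov's inequality then gives
\[
P\!\left(\chi^2_p(\lambda)>p+\lambda+c\right)\le\exp\!\left(-t(p+\lambda)(1+r)-\tfrac{p}{2}\log(1-2t)+\tfrac{\lambda t}{1-2t}\right).
\]
The decisive step is the choice $t=\dfrac{r}{2(1+r)}\in(0,1/2)$, which makes $1-2t=(1+r)^{-1}$: the two $\lambda$-dependent terms then cancel exactly ($-t(p+\lambda)(1+r)$ contributes $-\lambda r/2$, while $\lambda t/(1-2t)$ contributes $+\lambda r/2$), leaving $\tfrac{p}{2}\log(1+r)-\tfrac{pr}{2}=-\tfrac{p}{2}\{r-\log(1+r)\}$, which is the asserted bound.

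For (iii) I would apply Markov to $e^{-s\chi^2_p(\lambda)}$ with $s>0$; since $\exp(-\lambda s/(1+2s))\le1$, the estimate is dominated by the central Chernoff bound $e^{s(p-c)}(1+2s)^{-p/2}$. Assuming $0<c<p$ (the case $c\ge p$ is trivial) and writing $u=c/p$, minimizing over $s$ gives $\exp\!\left(\tfrac{p}{2}\{u+\log(1-u)\}\right)$, and the elementary inequality $u+\log(1-u)\le-u^2/2$ on $[0,1)$ converts this into $\exp(-pu^2/4)=\exp(-c^2/(4p))$. For (ii) I would again use Markov on $e^{-s\chi^2_p(\lambda)}$ (dropping $(1+2s)^{-p/2}\le1$), which gives $P(\chi^2_p(\lambda)\le\omega\lambda)\le\exp\!\left(\lambda s\!\left(\omega-\tfrac{1}{1+2s}\right)\right)$; taking $s=\dfrac{1-\omega}{4\omega}$ yields the exponent $-\dfrac{\lambda(1-\omega)^2}{4(1+\omega)}$. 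Since $\omega<1$ forces $\tfrac{1}{4(1+\omega)}>\tfrac18$, this already beats $\lambda(1-\omega)^2/8$, and the surplus $\exp\!\left(-\dfrac{\lambda(1-\omega)^3}{8(1+\omega)}\right)$ dominates any fixed power of $\lambda$, so a factor $\lambda^{-1}$ can be absorbed into a constant $c_1$ (depending only on $\omega$, and uniform once $\omega$ is bounded away from $1$), giving the stated form. A possible alternative for (ii) is to bound $\chi^2_p(\lambda)\ge(Z+\sqrt{\lambda})^2$ for a standard normal $Z$, so that $P(\chi^2_p(\lambda)\le\omega\lambda)\le\Phi(-\sqrt{\lambda}\,(1-\sqrt{\omega}))$, and then to apply a Mills-ratio bound together with $(1-\sqrt{\omega})^2\ge(1-\omega)^2/4$.

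The moment-generating-function identity and the one-variable optimizations are routine bookkeeping. The one genuinely delicate point I anticipate is in (ii): matching the precise constant $1/8$ in the exponent and justifying the $\lambda^{-1}$ prefactor, i.e.\ recognizing that the natural Chernoff rate is strictly better than $(1-\omega)^2/8$ and exploiting that strict gap to absorb the polynomial factor into $c_1$. In (i), the exact cancellation of the noncentrality terms under $t=r/(2(1+r))$ is what produces the clean closed form, so verifying that identity and its sign bookkeeping is the place to be careful.
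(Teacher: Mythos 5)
Your proposal is correct, and it is genuinely more self-contained than what the paper offers: the paper gives no proof of (i) and (ii) at all, simply deferring to Lemma 4.2 of the first cited reference and to the second cited reference respectively, and for (iii) it uses a two-line stochastic-monotonicity argument ($P(\chi^2_p(\lambda)>c)$ is nondecreasing in $\lambda$, so the noncentral lower tail is dominated by the central one, which is then handled by Lemma \ref{lemma1}). I verified your computations: in (i), the choice $t=r/(2(1+r))$ indeed gives $1-2t=(1+r)^{-1}$, the terms $-t(p+\lambda)(1+r)$ and $\lambda t/(1-2t)$ contribute $-\lambda r/2$ and $+\lambda r/2$, and the residue is exactly $-\tfrac{p}{2}\{r-\log(1+r)\}$; in (iii), the optimal $s=u/(2(1-u))$ yields exponent $\tfrac{p}{2}\{u+\log(1-u)\}\le -pu^2/4$ (the inequality follows since the derivative of $u+\log(1-u)+u^2/2$ is $-u^2/(1-u)\le 0$); in (ii), $s=(1-\omega)/(4\omega)$ gives exponent $-\lambda(1-\omega)^2/(4(1+\omega))$, which strictly beats $-\lambda(1-\omega)^2/8$ for $\omega<1$, and the surplus $\exp\{-\lambda(1-\omega)^3/(8(1+\omega))\}$ is at most a constant times $\lambda^{-1}$ by $e^{-a\lambda}\le (ea\lambda)^{-1}$. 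Your approach buys a complete, checkable derivation where the paper relies on external results; the paper's route for (iii) buys brevity by recycling Lemma \ref{lemma1}. The one caveat, which you correctly flag, is that your $c_1$ in (ii) depends on $\omega$ and degenerates as $\omega\uparrow 1$ (and the argument implicitly assumes $\omega>0$, the case $\omega\le 0$ being trivial); this matches how the lemma is actually invoked in the paper, always with a fixed $\omega$ such as $1/2$, but the lemma's statement is silent on this dependence.
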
 
\noindent
Proof: See  \citep{Cao:2020} Lemma 4.2 for (i) and for (ii) on  \citep{Shin:2019}.  (iii)  follows from  the fact  if $U \sim \chi^2_p(\lambda)$, then $P(U>c)$ is strictly increasing in $\lambda$ for fixed $p$ and $c>0$. Hence
\begin{displaymath}
P(\chi^2_p(\lambda)-p \le -c) \le P(\chi^2_p-p \le -c) \le \exp(-\frac{c^2}{4p}).
\end{displaymath}

\subsection{Notations}
We define  $S_{\beta} =\{j:\beta_j \neq 0\}$.

Assume the true model is 
\begin{displaymath}
y \sim N_n(X \beta^{\star},\sigma_0^2 I_n) 
\end{displaymath}
and  let $S^{\star}=S_{\beta^{\star}}$, $s^{\star}=\vert S^{\star}\vert$.

We use $g_n \preceq M$ to denote $g_n \le M$ for sufficiently large $n$, $g_n \succeq M$ to denote $g_n \ge M$ for sufficiently large $n$.

\section{Posterior concentration rates}
Define empirical Bayes posterior probability of event $B \subset R^p$ as 
\begin{equation}
\Pi^n(B)=\frac{\int_{B} \int_{\sigma^2 > 0} L^{\alpha}(\beta,\sigma^2)\Pi_{n}(d \beta , d \sigma^2)}{\int_{R^p} \int_{\sigma^2 > 0} L^{\alpha}(\beta,\sigma^2)\Pi_{n}(d \beta , d \sigma^2)} =
\frac{\sum_S \int_{B}D^n_S(d\beta_S) \delta_{0^{S^c}}(d \beta_{S^c}) }{\sum_S \int_{R^p} D^n_S(d\beta_S)\delta_{0^{S^c}}(d \beta_{S^c})  }
\end{equation}
and  let $D_n= \sum_S {\int_{R^p} D^n_S(d\beta_S) \delta_{0^{S^c}}
(d \beta_{S^c})  }$. Then recalling the pdf of a multivariate $t$ distribution,
we have 
\begin{equation}
\begin{split}
 D_n 
=& \sum_S  \pi(S) \Gamma(\frac{n \alpha}{2} +a_0)(\frac{\gamma}{\gamma+\alpha})^{\frac{\vert S\vert}{2}}[\frac{1}{b_0+\frac{\alpha \Vert y-\hat{y}_S \Vert^2}{2}}]^{\frac{n \alpha}{2} +a_0} \\
\ge & \pi(S^{\ast}) \Gamma(\frac{n \alpha}{2} +a_0)(\frac{\gamma}{\gamma+\alpha})^{\frac{\vert S^{\ast}\vert}{2}}[\frac{1}{b_0+\frac{\alpha \Vert y-\hat{y}_{S^{\ast}} \Vert^2}{2}}]^{\frac{n \alpha}{2} +a_0}.
\end{split}
\end{equation}

Since $p \gg n$, we have to add some regularity conditions to get  posterior concentration results for our model.\\
\\
\textbf{regularity conditions:}\\
(A1) There exist constants $d_1$, $d_2 $, such that $0<d_1< \sigma_0^2 <d_2< \infty$,\\
(A2) $s^{\star} \log p = o(n)$.\\
(A3) $R\log p =o(n)$.\\
\\
\noindent
For a given $\Delta$, define $B_n(\Delta)=\{\beta \in R^p : \vert S_{\beta}\vert \ge \Delta\}$, i.e the set of $\beta$ vectors with no less then $\Delta$ non-zero entries. 

The following theorem implies that the posterior distribution is actually concentrated on a space of dimension  close to $s^{\star}$.
\begin{theorem}
Let $s^{\star} \le R$, and assume conditions (A1)-(A3) to hold. Then there exists constant $C > 1$, $G>0$ such that 
\begin{displaymath}
E_{\beta^{\star}}\{ \Pi^n(B_n(\Delta_n))\} \preceq \exp \{-Gs^{\star} \log(p/s^{\star})\}\to 0
\end{displaymath}
with $\Delta_n =Cs^{\star}$, uniformly in $\beta^{\star}$ as $n \to \infty$. \label{theorem1}
\end{theorem}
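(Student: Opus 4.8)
The plan is to bound the random quantity $\Pi^n(B_n(\Delta_n))$ on a high-probability event and absorb the complementary event into the expectation. Write $q=\tfrac{n\alpha}{2}+a_0$ and, for a model $S$,
\[
m_S=\pi(S)\,\Gamma(q)\Bigl(\tfrac{\gamma}{\gamma+\alpha}\Bigr)^{|S|/2}\Bigl(b_0+\tfrac{\alpha}{2}\|y-\hat y_S\|^2\Bigr)^{-q},
\]
so $D_n=\sum_S m_S\ge m_{S^\star}$. Under model $S$ the vector $\beta$ is supported on the coordinates of $S$, so $|S_\beta|\ge\Delta_n$ forces $|S|\ge\Delta_n$; only such models contribute to the numerator of $\Pi^n(B_n(\Delta_n))$, and therefore $\Pi^n(B_n(\Delta_n))\le\sum_{|S|\ge\Delta_n}m_S/m_{S^\star}$. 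The summand factors as
\[
\frac{m_S}{m_{S^\star}}=\frac{\pi(S)}{\pi(S^\star)}\Bigl(\tfrac{\gamma}{\gamma+\alpha}\Bigr)^{(|S|-s^\star)/2}\Bigl(\frac{b_0+\tfrac{\alpha}{2}\|y-\hat y_{S^\star}\|^2}{b_0+\tfrac{\alpha}{2}\|y-\hat y_S\|^2}\Bigr)^{q}.
\]
Since $|S|\ge\Delta_n=Cs^\star\ge s^\star$, the middle factor is $\le1$, and $\sum_{|S|=s}\pi(S)/\pi(S^\star)=\binom{p}{s^\star}(cp^a)^{-(s-s^\star)}$; so everything reduces to a sharp bound on the residual ratio.

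For the residual ratio I would argue by projection geometry. Let $P_S$ be the orthogonal projection onto $\operatorname{col}(X_S)$ (so $\hat y_S=P_Sy$), put $\widetilde S=S\cup S^\star$ and $r_S=\operatorname{rank}(X_{\widetilde S})-s^\star\le|S|$. Because $X_{S^\star}\beta^\star_{S^\star}$ lies in the range of both $P_{\widetilde S}$ and $P_{S^\star}$, both $I-P_{\widetilde S}$ and $P_{\widetilde S}-P_{S^\star}$ annihilate the regression mean, whence $(I-P_{\widetilde S})y=\sigma_0(I-P_{\widetilde S})z$, $(P_{\widetilde S}-P_{S^\star})y=\sigma_0(P_{\widetilde S}-P_{S^\star})z$, and Pythagoras gives
\[
\|y-\hat y_{S^\star}\|^2=\|y-\hat y_{\widetilde S}\|^2+\sigma_0^2\|(P_{\widetilde S}-P_{S^\star})z\|^2,\qquad \|(P_{\widetilde S}-P_{S^\star})z\|^2\sim\chi^2_{r_S},
\]
while monotonicity of nested projections gives $\|y-\hat y_S\|^2\ge\|y-\hat y_{\widetilde S}\|^2=\sigma_0^2\|(I-P_{\widetilde S})z\|^2\ge\sigma_0^2(\|z\|^2-\|P_Xz\|^2)$, using $\operatorname{col}(X_{\widetilde S})\subseteq\operatorname{col}(X)$. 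Combining, and using $b_0\ge0$,
\[
\frac{b_0+\tfrac{\alpha}{2}\|y-\hat y_{S^\star}\|^2}{b_0+\tfrac{\alpha}{2}\|y-\hat y_S\|^2}\le 1+\frac{\sigma_0^2\chi^2_{r_S}}{\|y-\hat y_S\|^2}\le 1+\frac{\chi^2_{r_S}}{\nu_n},\qquad \nu_n:=n-R-\sqrt{4ns^\star\log p}-\sqrt{4Rs^\star\log p}.
\]
By Lemma~\ref{lemma1} and Lemma~\ref{lemma2}(iii), outside an event of probability at most $e^{-\frac12 s^\star\log p}$ we have $\|z\|^2-\|P_Xz\|^2\ge\nu_n$ and, by a union bound over the $\binom{p}{s}\le p^s$ models of size $s$ with chi-square tail thresholds of order $s\sqrt{\log p}$, simultaneously $\chi^2_{r_S}\le C_2|S|\sqrt{\log p}$ for all $S$ with $|S|\ge\Delta_n$. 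Here (A1) gives $\sigma_0^2>0$, and (A2)--(A3) make these thresholds $o(n)$, so $\nu_n=n(1-o(1))\ge n/2$. On this event the residual ratio is $\le 1+2C_2|S|\sqrt{\log p}/n$, hence raised to the power $q\le n$ it is $\le\exp(C_1|S|\sqrt{\log p})$ for some constant $C_1$.

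Assembling the pieces, on the good event
\[
\Pi^n(B_n(\Delta_n))\le\sum_{s\ge\Delta_n}\binom{p}{s^\star}(cp^a)^{-(s-s^\star)}e^{C_1 s\sqrt{\log p}}\le 2\binom{p}{s^\star}e^{C_1 s^\star\sqrt{\log p}}e^{-\frac{a}{2}(C-1)s^\star\log p},
\]
where the geometric series is summed using $s-s^\star\ge(C-1)s^\star$ on $\{s\ge\Delta_n\}$ and $a\log p-C_1\sqrt{\log p}\to\infty$. Since $\binom{p}{s^\star}\le p^{s^\star}$ and $\sqrt{\log p}=o(\log p)$, choosing $C>1$ large enough (depending only on $a$) makes the exponent $\le -s^\star\log p\le -s^\star\log(p/s^\star)$ for all large $n$; taking expectations and adding the bad-event probability $e^{-\frac12 s^\star\log p}\le e^{-\frac12 s^\star\log(p/s^\star)}$ gives $E_{\beta^\star}\{\Pi^n(B_n(\Delta_n))\}\preceq\exp\{-Gs^\star\log(p/s^\star)\}$ for some $G>0$. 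All constants depend on $\beta^\star$ only through $s^\star=|S^\star|$, so the bound is uniform in $\beta^\star$, and $\exp\{-Gs^\star\log(p/s^\star)\}\to0$ since $p/s^\star\ge p/n\to\infty$. I expect the main obstacle to be exactly this residual ratio: bounding $\|y-\hat y_{S^\star}\|^2$ and $\|y-\hat y_S\|^2$ by crude worst-case values leaves a factor $\exp\bigl(O(R+\sqrt{n\log p})\bigr)$ that can overwhelm the $\exp(s^\star\log p)$ one can afford to spend when $R\gg s^\star$; the Pythagorean decomposition, which cuts the relevant chi-square from $n$ down to $r_S\le|S|$ degrees of freedom, is what makes the union bound over the $\binom{p}{s}$ models affordable against the strong dimension penalty built into $\pi$.
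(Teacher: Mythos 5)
Your reduction is the same as the paper's: bound $D_n$ below by the $S^\star$ summand, note that only models with $|S|\ge\Delta_n$ contribute to the numerator, factor $m_S/m_{S^\star}$, and isolate the residual ratio via the Pythagorean split through $P_{S\cup S^\star}$, so that the excess $\|y-\hat y_{S^\star}\|^2-\|y-\hat y_S\|^2$ is controlled by a \emph{central} $\chi^2_{r_S}$ with $r_S\le|S|$ degrees of freedom. All of that is correct and is exactly the mechanism in the paper's proof. The gap is in how you then control $\chi^2_{r_S}$ uniformly over models. You claim that a union bound over the $\binom{p}{s}$ models of size $s$ gives $\chi^2_{r_S}\le C_2|S|\sqrt{\log p}$ simultaneously, with failure probability $e^{-\frac12 s^\star\log p}$. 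That is false: for $t\gg r$ the chi-square tail is sub-exponential, $P(\chi^2_r>t)\asymp e^{-t/2}$ up to polynomial factors (the bound $2e^{-a^2/(4r)}$ of Lemma~\ref{lemma1} is only valid for deviations $a=O(r)$; the paper only ever invokes it in that regime). At threshold $t=C_2 s\sqrt{\log p}$ the tail is of order $e^{-C_2 s\sqrt{\log p}/2}$, which cannot beat the $e^{s\log p}$ union bound for any constant $C_2$. To make the union bound work you must raise the threshold to $\Theta(s\log p)$, but then the surviving factor $(1+\chi^2_{r_S}/\nu_n)^q\le e^{O(s\log p)}=p^{O(s)}$ overwhelms the prior penalty $(cp^a)^{-(s-s^\star)}$ unless $a$ exceeds a specific absolute constant --- a restriction the theorem does not impose (it only needs $C>a^{-1}$ for arbitrary $a>0$).

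The paper sidesteps this entirely by never taking a supremum over models: on the event $R_{n,S}$ it bounds the ratio by $\exp\{cZ_1\}$ with $c\to\alpha/(1+\alpha)<1/2$ and then takes the \emph{expectation} model by model using the chi-square moment generating function, yielding a factor $\phi^{|S|}$ with $\phi=(1+\alpha)/(1-\alpha)$ a constant independent of $p$, which the prior's $(cp^a)^{-s}$ kills for every $a>0$. Your own setup is one step away from this fix: $(P_{S\cup S^\star}-P_{S^\star})z$ is independent of $(I-P_X)z$, so you may condition on the single event $\{\|(I-P_X)z\|^2\ge\nu_n\}$ (no union bound needed) and then integrate $\exp\{(q/\nu_n)\chi^2_{r_S}\}$ directly, noting $q/\nu_n\to\alpha/2<1/2$ so the mgf is finite and gives $(1-2q/\nu_n)^{-r_S/2}$, again a constant to the power $|S|$. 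As written, however, the union-bound step does not hold and the proof does not go through.
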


To get posterior concentration results, we first establish model selection result. The following theorem demonstrates that asymptotically our empirical Bayesian posterior  will not include any unnecessary variables.
\begin{theorem}
Let $s^{\star} \le R$, and assume conditions (A1)-(A3) to hold. Also, let the constant $a$  in the marginal prior of $S$ satisfy $p^a \gg s^{\star}$. Then $E_{\beta^{\star}}\{ \Pi^n(\beta:S_{\beta} \supset S^{\star} )\} \to 0$, uniformly in $\beta^{\star}$.\label{theorem2}
\end{theorem}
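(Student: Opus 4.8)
The plan is to transfer the whole problem to the model space. Conditional on a model $S$, the posterior law of $\beta_S$ is the absolutely continuous multivariate $t$ density of \eqref{beta_S given S} while $\beta_{S^c}=0$, so $S_\beta=S$ almost surely; hence, reading ``$S_\beta\supset S^{\star}$'' as strict containment,
\begin{equation*}
\Pi^n\big(\beta:S_\beta\supset S^{\star}\big)=\sum_{S\supsetneq S^{\star}}\pi^n(S),
\end{equation*}
the sum being over the strict supersets of $S^{\star}$, all of which satisfy $|S|\le R$ since $\pi(S)=0$ otherwise. Using the displayed formula $\pi^n(S)\propto\pi(S)\,(\gamma/(\alpha+\gamma))^{|S|/2}\{b_0+\tfrac{\alpha}{2}\|y-\hat{y}_S\|^2\}^{-(a_0+\alpha n/2)}$ and bounding the normalizer below by its $S=S^{\star}$ term, I would reduce the claim to estimating
\begin{equation*}
\sum_{S\supsetneq S^{\star}}\frac{\pi(S)}{\pi(S^{\star})}\Big(\frac{\gamma}{\alpha+\gamma}\Big)^{(|S|-s^{\star})/2}\Big(\frac{b_0+\tfrac{\alpha}{2}\|y-\hat{y}_{S^{\star}}\|^2}{b_0+\tfrac{\alpha}{2}\|y-\hat{y}_S\|^2}\Big)^{a_0+\alpha n/2}.
\end{equation*}

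The probabilistic input is an orthogonal decomposition. For $S\supset S^{\star}$ one has $\mathrm{col}(X_{S^{\star}})\subseteq\mathrm{col}(X_S)$, so with $P_S=X_S(X_S^TX_S)^{-1}X_S^T$ and $y=X\beta^{\star}+\sigma_0 z$ we get $(I-P_S)X\beta^{\star}=0=(P_S-P_{S^{\star}})X\beta^{\star}$, whence $\|y-\hat{y}_S\|^2=\sigma_0^2\|(I-P_S)z\|^2\sim\sigma_0^2\chi^2_{n-|S|}$ and, with $W_S:=\|(P_S-P_{S^{\star}})z\|^2\sim\chi^2_{|S|-s^{\star}}$, $\|y-\hat{y}_{S^{\star}}\|^2-\|y-\hat{y}_S\|^2=\sigma_0^2 W_S$ --- both central chi-squares whose laws are free of $\beta^{\star}$, which is what will give uniformity. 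I would then work on $A_n=\bigcap_{S\supsetneq S^{\star},\,|S|\le R}\{\|y-\hat{y}_S\|^2\ge(1-\epsilon)\sigma_0^2(n-|S|)\}$; by Lemma \ref{lemma1} and a union bound over the at most $p^{R+1}$ such models, $P(A_n^c)\le 2\exp\{(R+1)\log p-(\epsilon^2/4)(n-R)\}\to0$ by (A3). On $A_n$, $b_0+\tfrac{\alpha}{2}\|y-\hat{y}_S\|^2\ge\tfrac{\alpha}{2}(1-\epsilon)\sigma_0^2(n-|S|)$, so using $\log(1+x)\le x$ and $W_S\ge0$,
\begin{equation*}
\Big(\frac{b_0+\tfrac{\alpha}{2}\|y-\hat{y}_{S^{\star}}\|^2}{b_0+\tfrac{\alpha}{2}\|y-\hat{y}_S\|^2}\Big)^{a_0+\alpha n/2}\le\exp\Big(\frac{a_0+\alpha n/2}{(1-\epsilon)(n-|S|)}\,W_S\Big)\le\exp\Big(\frac{\theta}{2}W_S\Big),
\end{equation*}
because $|S|\le R=o(n)$ and $a_0$ fixed force $\tfrac{a_0+\alpha n/2}{(1-\epsilon)(n-|S|)}\le\tfrac{\alpha(1+\delta)}{2(1-\epsilon)}=:\tfrac{\theta}{2}$ for $n$ large; here the strict inequality $\alpha<1$ is exactly what lets one pick $\epsilon,\delta>0$ so small that $\theta<1$.

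Taking expectations --- using $\Pi^n\le1$ off $A_n$ and $E\{\exp(\tfrac{\theta}{2}\chi^2_m)\}=(1-\theta)^{-m/2}$, finite since $\theta<1$ --- I would obtain
\begin{equation*}
E_{\beta^{\star}}\big\{\Pi^n(\beta:S_\beta\supset S^{\star})\big\}\le\sum_{S\supsetneq S^{\star}}\frac{\pi(S)}{\pi(S^{\star})}\Big(\frac{\gamma}{\alpha+\gamma}\Big)^{(|S|-s^{\star})/2}(1-\theta)^{-(|S|-s^{\star})/2}+P(A_n^c).
\end{equation*}
Grouping by $k=|S|-s^{\star}\ge1$ there are $\binom{p-s^{\star}}{k}$ terms, $\pi(S)/\pi(S^{\star})=\binom{p}{s^{\star}}\binom{p}{s^{\star}+k}^{-1}(cp^a)^{-k}$, and the identity $\binom{p-s^{\star}}{k}\binom{p}{s^{\star}}/\binom{p}{s^{\star}+k}=\binom{s^{\star}+k}{k}\le(s^{\star}+1)^k$ collapses the right-hand side to $\sum_{k\ge1}\big((s^{\star}+1)\lambda/(cp^a)\big)^k$ with $\lambda=(\gamma/(\alpha+\gamma))^{1/2}(1-\theta)^{-1/2}$ a fixed constant. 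Since $p^a\gg s^{\star}$ the ratio $(s^{\star}+1)\lambda/(cp^a)\to0$, so this geometric series is $\le 2(s^{\star}+1)\lambda/(cp^a)\to0$; combined with $P(A_n^c)\to0$ this gives the theorem, and as every bound depends on $\beta^{\star}$ only through $s^{\star}=|S^{\star}|$, the convergence is uniform in $\beta^{\star}$.

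The step I expect to be the main obstacle is the constant accounting that forces $\theta<1$: the coefficient of the $\chi^2$ variable $W_S$ in the exponent must stay strictly below $1/2$, which needs $\alpha<1$ together with a careful comparison of $n$ against $n-|S|$ and of $b_0+\tfrac{\alpha}{2}\|y-\hat{y}_S\|^2$ against its lower bound on $A_n$; and then the resulting constant $\lambda$ has to be dovetailed with the combinatorial factor $(s^{\star}+1)^k$ so that the stated hypothesis $p^a\gg s^{\star}$ --- rather than a stronger condition on $a$ --- already suffices for summability.
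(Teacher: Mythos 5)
Your proof is correct and follows essentially the same route as the paper's: the same comparison of $\pi^n(S)$ to $\pi^n(S^\star)$ on a high-probability event where $\|y-\hat y_S\|^2$ is bounded below, the same central chi-squared facts for supersets of $S^\star$, the same $1+x\le e^x$ plus mgf bound with the exponent's coefficient kept below $1/2$ thanks to $\alpha<1$, and the same combinatorial collapse to a geometric series with ratio of order $s^\star p^{-a}$. The only real difference is that the paper first invokes Theorem \ref{theorem1} to truncate to $|S|\le Cs^\star$ and then bounds $\binom{s}{s-s^\star}\le s^{s-s^\star}\le (Cs^\star)^{s-s^\star}$, whereas your estimate $\binom{s^\star+k}{k}\le(s^\star+1)^k$ makes that truncation unnecessary --- a small but genuine simplification.
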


To get model selection consistency, it remains to show our empirical Bayesian posterior  will
asymptotically  not miss any true variables.

Define 
\begin{equation}
\kappa(s)=\kappa_X(s)= \inf_{\beta:0< \vert S_{\beta}\vert\le s} \frac{\parallel X \beta \parallel_2}{\parallel \beta \parallel_2}, \qquad s=1,2,\cdots p.
\end{equation}
Then $\kappa(s)$ is a non-increasing function of $s$. By  \citep{Martin:2017}, for any $\beta$,$\beta^ \prime$, $\kappa(\vert S_{\beta_S-\beta^\prime}\vert) \ge \kappa(\vert S_{\beta_S}\vert+\vert S_{\beta^\prime}\vert)$.

\begin{theorem}
Assume $s^{\star} \le R$, conditions (A1)-(A3) to hold and 
\begin{displaymath}
\min_{j\in S^{\star}}\vert\beta^{\star}_j\vert \ge \rho_n :=M^{\prime}(\log p)^{1/2},
\end{displaymath}
where $M^\prime = \frac{1}{\kappa(C^\prime s^{\star})}\{\frac{12 M d_2}{\alpha }\}^{1/2}$. We also assume $\kappa(C^{\prime}s^{\star})>0$, $C ^ \prime =C+1$ for $C$ as in Theorem \ref{theorem1} and $M$ is a constant with $M>1+a$,  $a>0$ being the parameter in the prior of $S$. Then
\begin{displaymath}
E_{\beta^{\star}} \{ \Pi^n(\beta : S_{\beta} \nsupseteq S^{\star}) \} \to 0.
\end{displaymath} \label{theorem3}
\end{theorem}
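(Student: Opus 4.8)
The plan is to show that the empirical Bayes posterior odds against any model missing a true covariate vanish, after first cutting down the collection of relevant models. By Theorem~\ref{theorem1}, $\Pi^n(\beta:S_\beta\nsupseteq S^\star)\le\Pi^n(B_n(\Delta_n))+\Pi^n(\beta:S_\beta\nsupseteq S^\star,\ |S_\beta|<Cs^\star)$ and the first term has expectation $\to0$; and by the mechanism behind Theorem~\ref{theorem2}, models that retain spurious covariates are asymptotically negligible, so it suffices to handle $S$ with $S\subseteq S^\star$. Together with $S\nsupseteq S^\star$ this means $S\subsetneq S^\star$, and there are only $\sum_{k\ge1}\binom{s^\star}{k}$ such models. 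Write $w(S)=\pi(S)\{\gamma/(\gamma+\alpha)\}^{|S|/2}\{b_0+\tfrac\alpha2\|y-\hat{y}_S\|^2\}^{-(a_0+\alpha n/2)}$, so that $\pi^n(S)=w(S)/\sum_{S'}w(S')\le w(S)/w(S^\star)$, and the task reduces to showing $\sum_{S\subsetneq S^\star}w(S)/w(S^\star)\to0$ on an event of probability $\to1$.

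Fix $S\subsetneq S^\star$ and set $k=s^\star-|S|\ge1$. Since $P_S$ projects onto a subspace of the column span of $X_{S^\star}$, the Pythagorean identity gives $\|y-\hat{y}_S\|^2=\|y-\hat{y}_{S^\star}\|^2+\|(P_{S^\star}-P_S)y\|^2$, where $\|y-\hat{y}_{S^\star}\|^2=\sigma_0^2\|(I-P_{S^\star})z\|^2\sim\sigma_0^2\chi^2_{n-s^\star}$ (because $(I-P_{S^\star})X\beta^\star=0$) and, writing $r_S=(I-P_S)X\beta^\star=(P_{S^\star}-P_S)X\beta^\star$, $\|(P_{S^\star}-P_S)y\|^2=\|r_S+\sigma_0(P_{S^\star}-P_S)z\|^2\sim\sigma_0^2\chi^2_{k}(\lambda_S)$ with non-centrality $\lambda_S=\|r_S\|^2/\sigma_0^2$. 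Because $r_S=X(\beta^\star-\theta^\star_S)$ for the least-squares coefficient $\theta^\star_S$ supported on $S$, with $\|\beta^\star-\theta^\star_S\|^2\ge\sum_{j\in S^\star\setminus S}(\beta^\star_j)^2\ge k\rho_n^2$, the definition of $\kappa$ and $\kappa(s^\star)\ge\kappa(C's^\star)$ give $\|r_S\|^2\ge\kappa(C's^\star)^2k\rho_n^2=\tfrac{12Md_2}{\alpha}k\log p$, hence $\lambda_S\ge\tfrac{12M}{\alpha}k\log p$.

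Now I would control the noise. By Lemma~\ref{lemma1}, $b_0+\tfrac\alpha2\|y-\hat{y}_{S^\star}\|^2=\tfrac\alpha2\sigma_0^2n(1+o(1))$ on an event of probability $\to1$; and by Lemma~\ref{lemma2}(ii), using the above lower bound on $\lambda_S$, one has $\|(P_{S^\star}-P_S)y\|^2\ge\omega\sigma_0^2\lambda_S$ except on an event of probability at most $c_1\lambda_S^{-1}\exp\{-\lambda_S(1-\omega)^2/8\}$, for a fixed $\omega\in(0,1)$ chosen later. On the good event, using $\|y-\hat{y}_S\|^2\ge\|y-\hat{y}_{S^\star}\|^2+\omega\sigma_0^2\lambda_S$ together with $\sigma_0^2<d_2$, $a_0+\alpha n/2\ge\alpha n/2$, and $k\log p=o(n)$ from (A2), the residual factor $\{(b_0+\tfrac\alpha2\|y-\hat{y}_{S^\star}\|^2)/(b_0+\tfrac\alpha2\|y-\hat{y}_S\|^2)\}^{a_0+\alpha n/2}$ is bounded by $p^{-6\omega Mk(1-o(1))}$. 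Since $\pi(S)=\binom{p}{|S|}^{-1}f_n(|S|)$ with $f_n(|S|)/f_n(s^\star)=(cp^a)^{k}$ and $\binom{p}{s^\star}/\binom{p}{|S|}\le p^k$, this yields $w(S)/w(S^\star)\le(cp^{1+a})^{k}\{(\gamma+\alpha)/\gamma\}^{k/2}p^{-6\omega Mk(1-o(1))}$.

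Summing over $S\subsetneq S^\star$, grouped by $k\ge1$ (with $\binom{s^\star}{k}\le(s^\star)^k\le p^{ak}$ because $p^a\gg s^\star$), the good-event contribution is at most $\sum_{k\ge1}[\,c\{(\gamma+\alpha)/\gamma\}^{1/2}p^{1+2a-6\omega M(1-o(1))}\,]^k$, a convergent geometric series tending to $0$ once $6\omega M>1+2a$; and the total probability of the bad events is at most $\sum_{k\ge1}(k\log p)^{-1}p^{(a-3M(1-\omega)^2/8)k}\to0$ once $3M(1-\omega)^2/8>a$. Since $M>1+a$, one can pick $\omega$ meeting both requirements. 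Finally, because $\Pi^n\le1$ and all the exceptional events — those of Lemmas~\ref{lemma1}--\ref{lemma2}, the bad events above, and those in Theorems~\ref{theorem1}--\ref{theorem2} — have probability $\to0$ uniformly in $\beta^\star$ (every bound depends on $\beta^\star$ only through $s^\star$), we obtain $E_{\beta^\star}\{\Pi^n(\beta:S_\beta\nsupseteq S^\star)\}\to0$ uniformly. The delicate points are: the reduction to $S\subseteq S^\star$, which must be imported from Theorem~\ref{theorem2}, since a direct treatment of models that simultaneously omit true covariates and add spurious ones would have to dominate the $\binom{p}{\,\cdot\,}$-many false-positive configurations; and the calibration of the constants (the factor $12$ inside $\rho_n$, the requirement $M>1+a$, and the choice of $\omega$) so that the signal gain $\tfrac{12Md_2}{\alpha}k\log p$ outruns the combined combinatorial-and-prior cost $p^{(1+2a)k}$ of the omitted variables while still leaving room for the non-central $\chi^2$ lower-tail estimate.
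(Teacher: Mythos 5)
There is a genuine gap at the very first step: the reduction to $S\subsetneq S^{\star}$. After removing models with $\vert S\vert>Cs^{\star}$ via Theorem \ref{theorem1}, the remaining class $\{S: S\nsupseteq S^{\star},\ \vert S\vert\le Cs^{\star}\}$ contains many models that simultaneously omit a true covariate \emph{and} include spurious ones (e.g.\ $S=\{1\}$ when $S^{\star}=\{2\}$). Theorem \ref{theorem2} covers only $S\supset S^{\star}$, so neither its statement nor its ``mechanism'' disposes of these mixed models: that mechanism rests on $(P_S-P_{S^{\star}})X\beta^{\star}=0$, which makes $y^{T}(P_S-P_{S^{\star}})y$ a \emph{central} chi-squared, and that identity fails the moment $S$ misses a true variable. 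You flag this yourself as the ``delicate point,'' but flagging it is not the same as closing it, and the $\binom{p-s^{\star}}{s-t}$-many spurious configurations are exactly where the work lies. The paper's proof does not make this reduction: it keeps the whole class $S\nsupseteq S^{\star}$, $\vert S\vert\le Cs^{\star}$, bounds $y^{T}(P_S-P_{S^{\star}})y$ by splitting it into the signal term $-\Vert(I-P_S)X\beta^{\star}\Vert^{2}$, a cross term, and the quadratic form $\epsilon^{T}(P_S-P_{S\cap S^{\star}})\epsilon$, applies the normal and chi-squared mgfs on the event $L_{n,S}$, and then runs a double sum over $s=\vert S\vert$ and $t=\vert S\cap S^{\star}\vert$ in which the prior penalty $(cp^{a})^{-s}$ absorbs the count of spurious choices while the signal gain $p^{-M(s^{\star}-t)}$ absorbs the count of omitted-variable patterns. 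Your argument, restricted to $S\subsetneq S^{\star}$, reproduces only the $s\le s^{\star}$, $S\subseteq S^{\star}$ slice of that computation.

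A secondary problem is the constant calibration in your final step. You need simultaneously $6\omega M>1+2a$ and (after the noncentral lower-tail bound of Lemma \ref{lemma2}(ii)) roughly $\tfrac{3M(1-\omega)^{2}}{2\alpha}>a$; with only $M>1+a$ these two constraints on $\omega\in(0,1)$ become incompatible once $a$ is moderately large (for $M$ near $1+a$ there is no admissible $\omega$ when $a\ge 4$ or so). The paper avoids this tension by not routing the signal through the lower tail of a noncentral chi-squared at all: it keeps $\Vert(I-P_S)X\beta^{\star}\Vert^{2}$ as a deterministic quantity inside the mgf bound, so the full factor $\exp\{-\tfrac{\alpha_i(1-\alpha_i)}{2\sigma_0^{2}}\Vert(I-P_S)X\beta^{\star}\Vert^{2}\}\le p^{-M(s^{\star}-t)}$ is available, with $L_{n,S}$ contributing only an exponentially small remainder. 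Within the $S\subsetneq S^{\star}$ slice your Pythagorean decomposition and the bound $\lambda_S\ge\tfrac{12M}{\alpha}k\log p$ are correct, but as written the proof neither covers the whole model class nor closes with the stated hypotheses on $M$.
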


\begin{remark}
Intuitively, we are not able to distinguish between $0$ and a very small non-zero.  Hence, in Theorem \ref{theorem3} we define a cutoff $\rho_n$, which is similar to Theorem 5 in\citep{Martin:2017} and Theorem 5 in \citep{Castillo:2015}.
\end{remark}

\begin{corollary}
(Selection consistency)Assume that the conditions of Theorem \ref{theorem3}  hold. Then $E_{\beta^{\star}} \{\Pi^n(\beta : S_{\beta} = S^{\star})\} \to 1$.
\end{corollary}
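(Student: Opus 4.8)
The plan is to derive the corollary directly from Theorems \ref{theorem2} and \ref{theorem3} by a union bound; analytically nothing new is needed, so the argument is essentially bookkeeping. First I would note that the selected model fails to coincide with the truth in exactly one of two mutually exclusive ways:
\begin{displaymath}
\{\beta : S_\beta \neq S^\star\} \;=\; \{\beta : S_\beta \nsupseteq S^\star\} \;\cup\; \{\beta : S_\beta \supsetneq S^\star\},
\end{displaymath}
i.e. either $S_\beta$ omits at least one genuine predictor, or $S_\beta$ contains all of $S^\star$ but also at least one spurious predictor. Since for each fixed data set $\Pi^n(\cdot)$ is a probability measure on $R^p$, finite subadditivity gives
\begin{displaymath}
\Pi^n(\beta : S_\beta \neq S^\star) \;\le\; \Pi^n(\beta : S_\beta \nsupseteq S^\star) + \Pi^n(\beta : S_\beta \supsetneq S^\star),
\end{displaymath}
and applying $E_{\beta^\star}$ to both sides preserves the inequality.

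Next I would invoke the two theorems. Theorem \ref{theorem3} shows $E_{\beta^\star}\{\Pi^n(\beta : S_\beta \nsupseteq S^\star)\} \to 0$, uniformly in $\beta^\star$, under exactly the standing hypotheses of the corollary, so the ``underfitting'' term is controlled. For the ``overfitting'' term I would apply Theorem \ref{theorem2}, whose $E_{\beta^\star}\{\Pi^n(\beta : S_\beta \supset S^\star)\} \to 0$ (with the containment read strictly) handles $\{S_\beta \supsetneq S^\star\}$; here one checks that the hypotheses of Theorem \ref{theorem2} are already implied by those of Theorem \ref{theorem3}, since conditions (A1)--(A3) and $s^\star \le R$ are common to both and the constant $a$ is the same prior parameter that appears in the requirement $M > 1+a$. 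The only point worth a remark is the growth condition $p^a \gg s^\star$ invoked in Theorem \ref{theorem2}; I would either carry it explicitly along the whole selection-consistency chain or simply note that it holds in the regime of interest, where $p$ dwarfs $n \ge R \ge s^\star$.

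Combining the two vanishing limits with the displayed inequality yields $E_{\beta^\star}\{\Pi^n(\beta : S_\beta \neq S^\star)\} \to 0$, whence
\begin{displaymath}
E_{\beta^\star}\{\Pi^n(\beta : S_\beta = S^\star)\} \;=\; 1 - E_{\beta^\star}\{\Pi^n(\beta : S_\beta \neq S^\star)\} \;\longrightarrow\; 1,
\end{displaymath}
and in fact uniformly in $\beta^\star$. I do not expect any genuine obstacle: all the substantive analysis lives in Theorems \ref{theorem1}--\ref{theorem3}, and the only thing to be careful about is that the two-way split of $\{S_\beta \neq S^\star\}$ is genuinely exhaustive, so that overfitting (bounded via Theorem \ref{theorem2}) and underfitting (bounded via Theorem \ref{theorem3}) between them account for every way $S_\beta$ can differ from $S^\star$.
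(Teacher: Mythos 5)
Your argument is exactly the paper's: the corollary is proved there by the one-line observation that it ``follows immediately from Theorems \ref{theorem2} and \ref{theorem3},'' which is precisely your decomposition of $\{S_\beta \neq S^\star\}$ into the underfitting event (Theorem \ref{theorem3}) and the strict-overfitting event (Theorem \ref{theorem2}) plus subadditivity. Your added remark that the hypothesis $p^a \gg s^\star$ of Theorem \ref{theorem2} must be carried along is a fair point of bookkeeping that the paper glosses over, but it does not change the argument.
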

\noindent
Proof: Follows immediately from Theorems \ref{theorem2} and \ref{theorem3}.

We now state our posterior concentration result. It is similar to the posterior concentration theorem in \citep{Martin:2017}. But our proof is completely different form theirs. They  apply Holder's inequality and Renyi divergence formula, while the key to our proof is  using the model selection consistency result.

Set
\begin{displaymath}
B_{\epsilon_n}=\{\beta \in R^p: \parallel X(\beta-\beta^{\star})\parallel_2^2 > \epsilon_n\},
\end{displaymath}
\begin{theorem}
Assume conditions (A1)-(A3) hold, then there exist  constant $M,G_2>0$ such that
\begin{displaymath}
E_{\beta^{\star}}\{\Pi^n(B_{M\epsilon_n})\} \preceq \exp({-G_2 \epsilon_n}) \to 0
\end{displaymath} 
uniformly in $\beta^{\star}$ as $n \to \infty$, where $\epsilon_n =s^{\star} \log (p/s^{\star})$. \label{theorem4}
\end{theorem}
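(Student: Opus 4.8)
The plan is to localise the posterior on the event $\{S_\beta=S^\star\}$ using the selection-consistency machinery of Theorems \ref{theorem1}--\ref{theorem3}, and then, on that event, to bound $\|X(\beta-\beta^\star)\|_2^2$ by splitting it into a posterior-spread term and a least-squares estimation-error term, each of which is (conditionally) a scaled $\chi^2$ variable controlled by Lemmas \ref{lemma1}--\ref{lemma2}.

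First I would write $\Pi^n(B_{M\epsilon_n})\le\Pi^n(\beta:S_\beta\ne S^\star)+\Pi^n\big(B_{M\epsilon_n}\cap\{\beta:S_\beta=S^\star\}\big)$. For the first summand I would use that the arguments behind Theorems \ref{theorem1}, \ref{theorem2} and \ref{theorem3} in fact give bounds of order $E_{\beta^\star}\{\cdot\}\preceq\exp(-c\epsilon_n)$: Theorem \ref{theorem1} is already stated in this form with $\epsilon_n=s^\star\log(p/s^\star)$, while the over- and under-fitting estimates (the latter using the beta-min condition of Theorem \ref{theorem3}) are of the same exponential order. For the second summand, note that up to a Lebesgue-null set $\{S_\beta=S^\star\}$ lies in the support of the $S=S^\star$ component of the prior, so this quantity equals $\pi^n(S^\star)\int_{B_{M\epsilon_n}}\pi^n(\beta_{S^\star}\mid S^\star)\,d\beta_{S^\star}\le\Pi^n(B_{M\epsilon_n}\mid S^\star)$, with $\pi^n(\beta_{S^\star}\mid S^\star)$ the multivariate $t$-posterior of $\beta_{S^\star}$ given $S^\star$.

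On $\{S_\beta=S^\star\}$ the vector $\beta$ is supported on $S^\star$, so, writing $P_{S^\star}=X_{S^\star}(X_{S^\star}^TX_{S^\star})^{-1}X_{S^\star}^T$, one has $\|X(\beta-\beta^\star)\|_2^2\le 2Q_1+2Q_2$ with $Q_1=(\beta_{S^\star}-\hat\beta_{S^\star})^TX_{S^\star}^TX_{S^\star}(\beta_{S^\star}-\hat\beta_{S^\star})$ and $Q_2=\|X_{S^\star}(\hat\beta_{S^\star}-\beta^\star_{S^\star})\|_2^2=\sigma_0^2\|P_{S^\star}z\|_2^2$. Under $P_{\beta^\star}$, $Q_2\sim\sigma_0^2\chi^2_{s^\star}$; since $s^\star/\epsilon_n=1/\log(p/s^\star)\to0$ and $\sigma_0^2<d_2$ by (A1), Lemma \ref{lemma1} gives $Q_2\le(M/4)\epsilon_n$ off a $P_{\beta^\star}$-set of probability $\preceq\exp(-c\epsilon_n)$, for any fixed large $M$ and $n$ large. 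For $Q_1$ I would exploit the $t$-structure: conditionally on $\sigma^2$ and $S^\star$, the normal posterior (4) gives $Q_1=\tfrac{\sigma^2}{\alpha+\gamma}W$ with $W\sim\chi^2_{s^\star}$, while by (5) the posterior of $\sigma^2$ given $S^\star$ is $IG\big(a_0+\tfrac{\alpha n}{2},\,b_0+\tfrac{\alpha}{2}\|y-\hat y_{S^\star}\|_2^2\big)$; and since $S^\star$ is the true model, $\|y-\hat y_{S^\star}\|_2^2=\sigma_0^2\|(I-P_{S^\star})z\|_2^2\sim\sigma_0^2\chi^2_{n-s^\star}$, which by Lemma \ref{lemma1} and (A1) is at most $d_2 n$ off a $P_{\beta^\star}$-set of probability $\preceq\exp(-cn)\preceq\exp(-c\epsilon_n)$ (using $\epsilon_n=o(n)$ from (A2)). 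On that set the inverse-gamma scale is $\preceq n$ and its shape is $\sim\alpha n/2$, so writing its upper tail as a lower $\chi^2$ tail and applying Lemma \ref{lemma2}(iii) gives $\Pi^n(\sigma^2>K\mid S^\star)\preceq\exp(-cn)$ for a constant $K$ depending only on $d_2$. On $\{\sigma^2\le K\}$ one has $2Q_1\le\tfrac{2K}{\alpha+\gamma}W$, so choosing $M$ large enough that $\tfrac{M(\alpha+\gamma)}{4K}\ge2$ and using $\epsilon_n\gg s^\star$, Lemma \ref{lemma1} gives $\Pi^n\big(2Q_1>(M/2)\epsilon_n\mid S^\star\big)\le P\big(\chi^2_{s^\star}\ge2\epsilon_n\big)+\Pi^n(\sigma^2>K\mid S^\star)\preceq\exp(-c\epsilon_n)$. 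Collecting the selection term together with the $Q_2$, $\sigma^2$ and $W$ events, and taking $E_{\beta^\star}$ (bounding the posterior probability by $1$ on each exceptional data-event, each of $P_{\beta^\star}$-probability exponentially small in $\epsilon_n$), yields $E_{\beta^\star}\{\Pi^n(B_{M\epsilon_n})\}\preceq\exp(-G_2\epsilon_n)$ for $M$ large and some $G_2>0$.

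I expect the main obstacle to be the joint control of $\sigma^2$ and the posterior spread of $\beta_{S^\star}$ --- that is, showing that the $t$-distributed marginal posterior of $\beta_{S^\star}$ keeps $\|X_{S^\star}(\beta_{S^\star}-\hat\beta_{S^\star})\|_2^2$ at order $O(s^\star)=o(\epsilon_n)$ --- which needs a clean handle on the inverse-gamma scale $b_0+\tfrac{\alpha}{2}\|y-\hat y_{S^\star}\|_2^2$, exactly the point where the unknown-variance analysis departs from \citep{Martin:2017}. A secondary but necessary check is that the selection bounds of Theorems \ref{theorem2}--\ref{theorem3}, in particular the underfitting bound resting on the beta-min condition, are genuinely of order $\exp(-c\epsilon_n)$ and not merely $o(1)$, so that they can be absorbed into the stated rate.
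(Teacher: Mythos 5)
There is a genuine gap at the very first step. You localise the posterior on $\{\beta:S_\beta=S^\star\}$ and charge the complement to Theorems \ref{theorem1}--\ref{theorem3}; but Theorem \ref{theorem3} (the underfitting bound) requires the beta-min condition $\min_{j\in S^\star}\vert\beta^\star_j\vert\ge\rho_n=M'(\log p)^{1/2}$, which is \emph{not} among the hypotheses of Theorem \ref{theorem4} --- the theorem assumes only (A1)--(A3) and is asserted uniformly in $\beta^\star$. Without a signal-strength assumption, $E_{\beta^\star}\Pi^n(\beta:S_\beta\nsupseteq S^\star)$ need not vanish at all: a coordinate with a tiny nonzero $\beta^\star_j$ will be dropped by the posterior with high probability, so the reduction to $S=S^\star$ fails. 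This is precisely where the paper's proof departs from yours. It retains all models $S$ with $\vert S\vert\le Cs^\star$ and splits each $S\nsupseteq S^\star$ according to whether $\parallel(I-P_S)X\beta^\star\parallel^2\ge M\epsilon_n/2$ (the event $Q_{n,S}$). When this quantity is large, a beta-min-free analogue of the Theorem \ref{theorem3} argument (Lemma \ref{lemma3}, in which the event $H^n_S$ plays the role of the beta-min condition) shows $E_{\beta^\star}\pi^n(S)$ is exponentially small in $\epsilon_n$ (term $W_1$); when it is small, the underfitted model $S$ still reproduces $X\beta^\star$ to within $o(M\epsilon_n)$, and one must instead show that the conditional $t$-posterior $\pi^n(\cdot\mid S)$ assigns exponentially little mass to $B_{M\epsilon_n}$ --- this is the $W_2$ term, handled via a generalized H\"older inequality and noncentral $\chi^2$ tail bounds. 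Your treatment of the $S=S^\star$ component (controlling the posterior spread through the inverse-gamma scale $b_0+\tfrac{\alpha}{2}\Vert y-\hat y_{S^\star}\Vert^2$ and the conditional $\chi^2_{s^\star}$) is sound and is in spirit the paper's $W_3$ computation restricted to the single model $S^\star$, but it covers only part of the sum over $S$.

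A secondary problem, which you flag as a ``necessary check'' but which in fact fails and would sink the approach even if the beta-min condition were added to the hypotheses: the bounds actually established in Theorems \ref{theorem2} and \ref{theorem3} are only $o(1)$, of order roughly $s^\star p^{-a}$ and $s^\star p^{1-M}$ respectively, and neither is $\preceq\exp(-G_2\epsilon_n)=(p/s^\star)^{-G_2 s^\star}$ once $s^\star\to\infty$. So the selection terms cannot be absorbed into the claimed exponential rate; only the dimension bound of Theorem \ref{theorem1} and the purpose-built Lemma \ref{lemma3} come with the right $\exp\{-Gs^\star\log(p/s^\star)\}$ decay.
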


By adding some conditions on $X$, we are able to seperate $\beta$ from $X \beta$ so we can get posterior consistency result for $\beta$.

Set
\begin{displaymath}
B^{\prime}_{\delta_n}=\{\beta \in R^p: \parallel \beta-\beta^{\star}\parallel_2^2 > \delta_n\},
\end{displaymath}
where $\delta_n$ is a positive sequence of constants to be specified.

\begin{theorem}
There exists a constant $M$ such that 
\begin{displaymath}
E_{\beta^{\star}}\{\Pi^n(B^{\prime}_{M\delta_n})\} \preceq \exp({-G \epsilon_n}) \to 0
\end{displaymath}
uniformly in $\beta^{\star}$ as $n \to \infty$, where 
\begin{displaymath}
\delta_n = \frac{s^{\star} \log (p/s^{\star})}{\kappa(C^{\prime }s^{\star})^2},
\end{displaymath}
$G>0$ is a constant, $C^{\prime}=C+1$, $C$ being the  constant in Theorem \ref{theorem1}.
\end{theorem}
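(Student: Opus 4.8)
The plan is to deduce this $\ell_2$ contraction bound from Theorem \ref{theorem4} (the $\|X(\beta-\beta^{\star})\|_2^2$ bound) together with Theorem \ref{theorem1} (effective dimension $\preceq s^{\star}$), using the restricted eigenvalue quantity $\kappa$ to pass from $\|X(\beta-\beta^{\star})\|_2$ to $\|\beta-\beta^{\star}\|_2$. No new probabilistic estimates should be needed: the work is already done in Theorems \ref{theorem1} and \ref{theorem4}.

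First I would work on the event $A_n=\{\beta:\vert S_{\beta}\vert<\Delta_n\}$ with $\Delta_n=Cs^{\star}$ as in Theorem \ref{theorem1}. For $\beta\in A_n$ with $\beta\neq\beta^{\star}$ we have $S_{\beta-\beta^{\star}}\subseteq S_{\beta}\cup S^{\star}$, hence $\vert S_{\beta-\beta^{\star}}\vert\le \vert S_{\beta}\vert+s^{\star}<Cs^{\star}+s^{\star}\le C's^{\star}$ with $C'=C+1$; by the definition of $\kappa$ and its monotonicity (recorded just before Theorem \ref{theorem3}), and since $\kappa(C's^{\star})>0$ by hypothesis,
\[
\|X(\beta-\beta^{\star})\|_2 \;\ge\; \kappa(C's^{\star})\,\|\beta-\beta^{\star}\|_2 .
\]
Therefore $\|\beta-\beta^{\star}\|_2^2>M\delta_n=M\,\epsilon_n/\kappa(C's^{\star})^2$ forces $\|X(\beta-\beta^{\star})\|_2^2>M\epsilon_n$ with $\epsilon_n=s^{\star}\log(p/s^{\star})$; that is, $B'_{M\delta_n}\cap A_n\subseteq B_{M\epsilon_n}$. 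The degenerate case $\beta=\beta^{\star}$ lies in neither $B'_{M\delta_n}$ nor $B_{M\epsilon_n}$, so invoking $\kappa$ (which presumes $S_{\beta-\beta^{\star}}\neq\emptyset$) is legitimate.

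Second, since $A_n^c=\{\vert S_{\beta}\vert\ge\Delta_n\}=B_n(\Delta_n)$, the set inclusion above gives
\[
\Pi^n(B'_{M\delta_n}) \;\le\; \Pi^n(B'_{M\delta_n}\cap A_n)+\Pi^n(A_n^c) \;\le\; \Pi^n(B_{M\epsilon_n})+\Pi^n(B_n(\Delta_n)),
\]
where $M$ is the constant supplied by Theorem \ref{theorem4} (any larger $M$ works as well, since enlarging $M$ shrinks $B_{M\epsilon_n}$ and $B'_{M\delta_n}$). Taking $E_{\beta^{\star}}$ and applying Theorem \ref{theorem4} to the first term and Theorem \ref{theorem1} (with $\Delta_n=Cs^{\star}$) to the second,
\[
E_{\beta^{\star}}\{\Pi^n(B'_{M\delta_n})\} \;\preceq\; \exp(-G_2\epsilon_n)+\exp(-G_1 s^{\star}\log(p/s^{\star})) \;\preceq\; \exp(-G\epsilon_n),
\]
with $G=\min\{G_1,G_2\}>0$, which is the assertion. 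The only genuinely delicate bookkeeping point is compatibility of constants: one must take $C'=C+1$ exactly so that the dimension bound $\vert S_{\beta}\vert<Cs^{\star}$ yields $\vert S_{\beta-\beta^{\star}}\vert<C's^{\star}$ and hence $\kappa(\vert S_{\beta-\beta^{\star}}\vert)\ge\kappa(C's^{\star})$, and one must use a common $M$ in $B_{M\epsilon_n}$ and $B'_{M\delta_n}$; everything else is routine.
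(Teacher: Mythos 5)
Your argument is correct and is exactly the route the paper intends: it defers to the proof of Theorem~3 in \citep{Martin:2017}, which is precisely this reduction --- restrict to the low-dimensional event via Theorem~\ref{theorem1}, use the monotonicity of $\kappa$ and the bound $\vert S_{\beta-\beta^{\star}}\vert\le C's^{\star}$ to convert the prediction-loss ball into the $\ell_2$ ball, and then invoke Theorem~\ref{theorem4}. The only cosmetic remark is that $\kappa(C's^{\star})>0$ is implicit in the theorem statement (otherwise $\delta_n$ is undefined), which you correctly flag rather than assume silently.
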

\noindent
Proof: Same as the proof of Theorem 3 in \citep{Martin:2017}.\\
\\
\begin{remark} 
If  for any $\vert S\vert<R$ we have
\begin{displaymath}
 0<k_1<\lambda_{min}(X_S^T X_S/n)<\lambda_{max}(X_S^T X_S/n)<k_2<\infty,
\end{displaymath}
where $k_1$,$k_2$ are positive constant, then we get posterior consistency for the coefficient $\beta$ under $l_2$ norm .
\end{remark}

\section{Bernstein von-Mises Theorem}
In this section, we show that the posterior distribution of $\beta$ is asymptotically normal and this property  leads to many interesting results.
\begin{theorem}
(Bernstein von-Mises Theorem) Let $H$ denote Hellinger distance, $d_{TV}$ denote the total variation distance and $\Pi^n $ be empirical Bayes posterior distribution for $\beta$. If $s^{\star}=o(\sqrt{n}/\log n)$ and $E_{\beta^{\star}}\pi^n(S^{\star})\to 1$, then

\begin{equation}
E_{\beta^{\star}}H^2(\Pi^n,N_{s^{\star}}(\hat{\beta}_{S^{\star}},\frac{\sigma_0^2}{\alpha+\gamma}(X^T_{S^{\star}}X_{S^{\star}})^{-1})\bigotimes \delta_{0S^{\star c}}) \to 0
 \label{tv of posterior}
\end{equation}
uniformly in $\beta^{\star}$ as $n \to \infty$, where $\delta_{0S^{\star c}}$ denotes the point mass distribution for $\beta_{S^{\star c}}$ concentrated at the origin.

Since $d_{TV}(\cdot) \le H^2(\cdot)$, we also have
\begin{equation}
E_{\beta^{\star}}d_{TV}(\Pi^n,N_{s^{\star}}(\hat{\beta}_{S^{\star}},\frac{\sigma_0^2}{\alpha+\gamma}(X^T_{S^{\star}}X_{S^{\star}})^{-1})\bigotimes \delta_{0S^{\star c}}) \to 0 
 \label{tv of posterior}
\end{equation}
uniformly in $\beta^{\star}$ as $n \to \infty$.
\label{bernstein}
\end{theorem}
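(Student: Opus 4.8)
The plan is to localize the mixture posterior $\Pi^n=\sum_S\pi^n(S)\,\pi^n(\cdot\mid S)\otimes\delta_{0_{S^c}}$ onto the true model $S^\star$ (using the assumption $E_{\beta^\star}\pi^n(S^\star)\to1$), and then exploit the classical convergence of a multivariate $t$-distribution to a Gaussian, while keeping careful track of the growing dimension $s^\star$ and of the unknown-variance fluctuation.

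\textbf{Step 1 (localization).} Since the Hellinger affinity $p\mapsto\int\sqrt{pq}\,d\mu$ is concave, $P\mapsto H^2(P,Q)$ is convex, so with $Q:=N_{s^\star}(\hat\beta_{S^\star},\tfrac{\sigma_0^2}{\alpha+\gamma}(X_{S^\star}^TX_{S^\star})^{-1})\otimes\delta_{0S^{\star c}}$,
\[
H^2(\Pi^n,Q)\le\sum_S\pi^n(S)\,H^2\big(\pi^n(\cdot\mid S)\otimes\delta_{0_{S^c}},\,Q\big).
\]
For each $S\neq S^\star$ some coordinate is a point mass at $0$ under one of the two measures and absolutely continuous under the other, so they are mutually singular and that term equals the maximal value of $H^2$, a universal constant $c$; for $S=S^\star$ the $\delta$-factors agree and the affinity factorizes, leaving exactly $H^2(\pi^n(\beta_{S^\star}\mid S^\star),\,N_{s^\star}(\hat\beta_{S^\star},\tfrac{\sigma_0^2}{\alpha+\gamma}(X_{S^\star}^TX_{S^\star})^{-1}))$. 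Hence
\[
H^2(\Pi^n,Q)\le H^2\big(\pi^n(\beta_{S^\star}\mid S^\star),\,N_{s^\star}(\hat\beta_{S^\star},\tfrac{\sigma_0^2}{\alpha+\gamma}(X_{S^\star}^TX_{S^\star})^{-1})\big)+c\,(1-\pi^n(S^\star)),
\]
and taking $E_{\beta^\star}$ kills the last term by hypothesis.

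\textbf{Step 2 ($t$ versus Gaussian).} From \eqref{beta_S given S} and the form of $D^n_S$, $\pi^n(\beta_{S^\star}\mid S^\star)$ is the multivariate $t$ with $\nu_n:=\alpha n+2a_0$ degrees of freedom, location $\hat\beta_{S^\star}$ and scale matrix $\tau_n^2(\alpha+\gamma)^{-1}(X_{S^\star}^TX_{S^\star})^{-1}$, where $\tau_n^2:=(2b_0+\alpha\|y-\hat y_{S^\star}\|^2)/(\alpha n+2a_0)$. The invertible affine map $v\mapsto\sqrt{(\alpha+\gamma)/\tau_n^2}\,(X_{S^\star}^TX_{S^\star})^{1/2}(v-\hat\beta_{S^\star})$, under which Hellinger distance is invariant, reduces the comparison to $H^2(t_{\nu_n}(0,I_{s^\star}),\,N(0,c_n^2 I_{s^\star}))$ with $c_n^2:=\sigma_0^2/\tau_n^2$. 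By the triangle inequality for $H$,
\[
H^2\big(t_{\nu_n}(0,I_{s^\star}),N(0,c_n^2 I_{s^\star})\big)\le 2\,H^2\big(t_{\nu_n}(0,I_{s^\star}),N(0,I_{s^\star})\big)+2\,H^2\big(N(0,I_{s^\star}),N(0,c_n^2 I_{s^\star})\big).
\]
The first term is deterministic; bounding it by the Kullback--Leibler divergence and using Stirling's formula for the ratio of $t$-normalizing constants together with $E_t\|X\|^2=s^\star\nu_n/(\nu_n-2)$ and the identity $E_t[\log(1+\|X\|^2/\nu_n)]=\psi(\tfrac{\nu_n+s^\star}{2})-\psi(\tfrac{\nu_n}{2})$, the leading contributions cancel, leaving $O(s^\star/\nu_n)=O(s^\star/n)\to0$. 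For the second term, since $\beta^\star$ is supported on $S^\star$ and $X_{S^\star}$ has full column rank, $\|y-\hat y_{S^\star}\|^2=\sigma_0^2\chi^2_{n-s^\star}$ under $P_{\beta^\star}$; Lemma \ref{lemma1} then gives $\tau_n^2-\sigma_0^2=O_P(n^{-1/2})+O(s^\star/n)$, and since $H^2(N(0,I_{s^\star}),N(0,c_n^2 I_{s^\star}))=1-(2c_n/(1+c_n^2))^{s^\star/2}$ has logarithm of order $-s^\star(c_n-1)^2=O_P(s^\star/n)$, it also tends to $0$. Both terms are bounded, so their $E_{\beta^\star}$-expectations vanish by dominated convergence, and because the law of $\chi^2_{n-s^\star}$ does not involve $\beta^\star$ this is uniform in $\beta^\star$; combined with Step 1 this gives \eqref{tv of posterior}, and the total-variation statement follows from $d_{TV}\le H^2$.

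\textbf{Main obstacle.} The delicate point is the dimension-dependent estimate in Step 2: one must show that the Hellinger (equivalently KL) distance between the $s^\star$-dimensional $t_{\nu_n}$ and the Gaussian vanishes despite $s^\star\to\infty$. This forces the Stirling expansion of $\log\Gamma(\tfrac{\nu_n+s^\star}{2})-\log\Gamma(\tfrac{\nu_n}{2})$ to be carried to second order so that it cancels against $\tfrac12 E_t\|X\|^2$ and $-\tfrac{\nu_n+s^\star}{2}E_t\log(1+\|X\|^2/\nu_n)$, leaving only an $O(s^\star/n)+O((s^\star)^3/n^2)$ remainder; the sparsity rate $s^\star=o(\sqrt n/\log n)$ (together with the $\chi^2$ fluctuation governing $\tau_n^2$) is exactly what makes both this remainder and the scale mismatch $c_n^2\to1$ negligible, so it is the quantitative bookkeeping of these cancellations, rather than any conceptual difficulty, that is the crux of the argument.
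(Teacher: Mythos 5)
Your proposal is correct, and its first step coincides exactly with the paper's: both bound $H^2(\Pi^n,Q)$ by convexity of the squared Hellinger distance over the mixture $\sum_S\pi^n(S)\,\pi^n(\cdot\mid S)$, discard the $S\neq S^\star$ terms at cost $O(1-\pi^n(S^\star))$ (the paper simply uses $H^2\le 2$ rather than arguing mutual singularity, but the effect is the same), and reduce to comparing the conditional posterior $\pi^n(\beta_{S^\star}\mid S^\star)$ --- a multivariate $t$ with $\alpha n+2a_0$ degrees of freedom --- with the limiting Gaussian. Where you genuinely diverge is in how that comparison is carried out. The paper lower-bounds the Hellinger affinity $\int\sqrt{\pi^n(\cdot\mid S^\star)\,N(\cdot)}$ directly: it restricts to the event that $\Vert y-\hat y_{S^\star}\Vert^2/\sigma_0^2$ lies within $\sqrt{n-s^\star}\log(n-s^\star)$ of $n-s^\star$, uses $1+x\le e^x$ to dominate the $t$ kernel by a Gaussian kernel, evaluates the resulting Gaussian integral in closed form, and checks that the product of correction factors (each of the form $(1+O(\log n/\sqrt n))^{-s^\star/4}$ or a ratio of Gamma functions) tends to $1$ under $s^\star=o(\sqrt n/\log n)$. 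You instead standardize by an affine map, split by the triangle inequality into a deterministic $t_{\nu_n}$-versus-$N(0,I_{s^\star})$ term controlled through $H^2\le \mathrm{KL}$ plus Stirling/digamma expansions, and a variance-mismatch term $N(0,I_{s^\star})$ versus $N(0,c_n^2I_{s^\star})$ controlled through the same $\chi^2_{n-s^\star}$ concentration. Your route buys a cleaner separation of the two sources of error (heavy tails of the $t$ versus randomness of the scale $\tau_n^2$) and makes transparent exactly where $s^\star=o(\sqrt n/\log n)$ enters; the paper's route avoids the KL machinery and the second-order Stirling bookkeeping by keeping everything inside one explicit integral. The identities you invoke ($E_t\Vert X\Vert^2=s^\star\nu_n/(\nu_n-2)$ and $E_t[\log(1+\Vert X\Vert^2/\nu_n)]=\psi(\tfrac{\nu_n+s^\star}{2})-\psi(\tfrac{\nu_n}{2})$) are correct, and the cancellation you describe does leave a remainder of order $s^\star/n+(s^\star)^3/n^2\to 0$, so I see no gap --- but for a complete write-up the second-order expansion in the growing dimension $s^\star$ must be written out in full, since that is precisely the step your sketch compresses.
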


Define $H_n(t)=\Pi^n(\{\beta: x^T\beta \le t\})$, $t_{\gamma}=\inf\{t: H_n(t) \ge 1- \gamma\}$.

\begin{corollary} 
(valid uncertainty quantification) If $\alpha+ \gamma \le 1$, then Eq.(\ref{tv of posterior}) implies 
\begin{equation}
P_{\beta^{\star}}(t_{\gamma} \ge x^T {\beta}^{\star}) \ge 1- \gamma + o(1), \qquad n \to \infty, 
\end{equation}
which validates uncertainty quantification. \label{uncertainty quantification}
\end{corollary}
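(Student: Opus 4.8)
The plan is to push the total-variation bound of Theorem \ref{bernstein} down to the one-dimensional functional $x^{T}\beta$ and then compare the resulting posterior quantile with the sampling fluctuation of $\hat{\beta}_{S^{\star}}$. Write $Q_{n}=N_{s^{\star}}(\hat{\beta}_{S^{\star}},\frac{\sigma_{0}^{2}}{\alpha+\gamma}(X_{S^{\star}}^{T}X_{S^{\star}})^{-1})\bigotimes\delta_{0S^{\star c}}$ and $\eta_{n}=d_{TV}(\Pi^{n},Q_{n})$, so that Eq.(\ref{tv of posterior}) reads $E_{\beta^{\star}}\eta_{n}\to0$. Since $d_{TV}(\Pi^{n},Q_{n})\ge|\Pi^{n}(A)-Q_{n}(A)|$ for every measurable $A\subset R^{p}$, taking $A=\{\beta:x^{T}\beta\le t\}$ gives $|H_{n}(t)-G_{n}(t)|\le\eta_{n}$ uniformly in $t$, where $G_{n}(t)=Q_{n}(\{\beta:x^{T}\beta\le t\})$. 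Under $Q_{n}$ we have $\beta_{S^{\star c}}=0$ a.s., so $x^{T}\beta=x_{S^{\star}}^{T}\beta_{S^{\star}}$ is Gaussian with mean $m_{n}:=x_{S^{\star}}^{T}\hat{\beta}_{S^{\star}}$ and variance $v_{n}^{2}:=\frac{\sigma_{0}^{2}}{\alpha+\gamma}x_{S^{\star}}^{T}(X_{S^{\star}}^{T}X_{S^{\star}})^{-1}x_{S^{\star}}$; hence $G_{n}=\Phi((\cdot-m_{n})/v_{n})$ (the case $v_{n}=0$ is trivial, since then $x^{T}\beta=x^{T}\beta^{\star}$ $Q_{n}$-a.s. and $t_{\gamma}=x^{T}\beta^{\star}$).

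Next I would use monotonicity of the nondecreasing function $H_{n}$: if $H_{n}(x^{T}\beta^{\star})<1-\gamma$ then $H_{n}(t)<1-\gamma$ for all $t\le x^{T}\beta^{\star}$, so $\{t:H_{n}(t)\ge1-\gamma\}\subseteq(x^{T}\beta^{\star},\infty)$ and therefore $t_{\gamma}\ge x^{T}\beta^{\star}$. Combining this with $H_{n}(x^{T}\beta^{\star})\le G_{n}(x^{T}\beta^{\star})+\eta_{n}$, for any $\epsilon>0$ one has the inclusion $\{G_{n}(x^{T}\beta^{\star})<1-\gamma-\epsilon\}\cap\{\eta_{n}\le\epsilon\}\subseteq\{t_{\gamma}\ge x^{T}\beta^{\star}\}$, so that $P_{\beta^{\star}}(t_{\gamma}\ge x^{T}\beta^{\star})\ge P_{\beta^{\star}}(G_{n}(x^{T}\beta^{\star})<1-\gamma-\epsilon)-P_{\beta^{\star}}(\eta_{n}>\epsilon)$.

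It remains to evaluate the first probability and control the second. Because $S^{\star}$ is the true support, $y=X_{S^{\star}}\beta^{\star}_{S^{\star}}+\sigma_{0}z$ and $x^{T}\beta^{\star}=x_{S^{\star}}^{T}\beta^{\star}_{S^{\star}}$, so $x^{T}\beta^{\star}-m_{n}=-\sigma_{0}x_{S^{\star}}^{T}(X_{S^{\star}}^{T}X_{S^{\star}})^{-1}X_{S^{\star}}^{T}z\sim N(0,\tau_{n}^{2})$ with $\tau_{n}^{2}=\sigma_{0}^{2}x_{S^{\star}}^{T}(X_{S^{\star}}^{T}X_{S^{\star}})^{-1}x_{S^{\star}}=(\alpha+\gamma)v_{n}^{2}$ (a deterministic identity, since $v_{n}$ and $\tau_{n}$ do not depend on $z$). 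Consequently $G_{n}(x^{T}\beta^{\star})=\Phi((x^{T}\beta^{\star}-m_{n})/v_{n})$ has the distribution of $\Phi(\sqrt{\alpha+\gamma}\,Z)$, $Z\sim N(0,1)$, and for $\gamma+\epsilon<1/2$, $P_{\beta^{\star}}(G_{n}(x^{T}\beta^{\star})<1-\gamma-\epsilon)=\Phi\big(\Phi^{-1}(1-\gamma-\epsilon)/\sqrt{\alpha+\gamma}\big)\ge1-\gamma-\epsilon$, the last inequality using precisely the hypothesis $\alpha+\gamma\le1$ (so $v_{n}\ge\tau_{n}$, i.e. the posterior is at least as dispersed as the sampling law) together with $\Phi^{-1}(1-\gamma-\epsilon)>0$. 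Since $P_{\beta^{\star}}(\eta_{n}>\epsilon)\le\epsilon^{-1}E_{\beta^{\star}}\eta_{n}\to0$ by Markov's inequality, we get $\liminf_{n}P_{\beta^{\star}}(t_{\gamma}\ge x^{T}\beta^{\star})\ge1-\gamma-\epsilon$ for every $\epsilon\in(0,1/2-\gamma)$, and letting $\epsilon\downarrow0$ yields the claim.

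The main obstacle is the data-dependence of $\eta_{n}$: the Bernstein--von Mises bound is only an $L^{1}(P_{\beta^{\star}})$ statement and cannot be inserted pathwise, so the device of splitting on $\{\eta_{n}\le\epsilon\}$ and invoking Markov's inequality is what converts it into the stated frequentist coverage bound. A minor point worth flagging explicitly is that the argument needs $\gamma<1/2$ (the usual confidence-level regime) so that $\Phi^{-1}(1-\gamma-\epsilon)$ is positive and $\alpha+\gamma\le1$ works in our favour; the distributional identity for $\hat{\beta}_{S^{\star}}$ and the quantile comparison are otherwise routine.
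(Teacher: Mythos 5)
Your proof is correct and follows essentially the same route as the paper's: project the Bernstein--von Mises total-variation bound onto the linear functional $x^{T}\beta$, compare the posterior $(1-\gamma)$-quantile with the exact Gaussian sampling law of $x_{S^{\star}}^{T}\hat{\beta}_{S^{\star}}$, and use $\alpha+\gamma\le 1$ (so the posterior spread dominates the sampling spread) to obtain the coverage bound. Your version is in fact more careful than the paper's, which asserts that the standardized quantile ``asymptotically equals $\Phi^{-1}(\gamma)$'' without the event-splitting and Markov-inequality step you use to convert the $L^{1}$ convergence of the random total-variation distance into a statement about $t_{\gamma}$; your explicit remark that $\gamma<1/2$ is needed for the sign of $\Phi^{-1}(1-\gamma-\epsilon)$ is also a point the paper leaves implicit.
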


Consider a pair $(\tilde{X}, \tilde{y})$ where $\tilde{X} \in \mathbb{R}^{d \times p}$ is a given matrix of explanatory variable values at which we want to predict the corresponding response $\tilde{y} \in \mathbb{R}^d$. Let $f^n_{\tilde{X}}(\tilde{y}\vert S)$ be the conditional posterior predictive distribution of $\tilde{y}$, given $S$.  $f^n_{\tilde{X}}(\tilde{y}\vert S)$  is a t-distribution, with $2a+\alpha n$ degree of freedom, location $\tilde{X}_S \hat{\beta}_S$, and the scale matrix 
\begin{equation}
\frac{b+(\alpha /2) \|y-\hat y_S \|^2}{a+\alpha n /2}(I_d+\frac{1}{\alpha +\gamma }\tilde{X}_S(\tilde{X}_S^T\tilde{X}_S)^{-1}\tilde{X}_S^T) .
\end{equation}
Then  the  predictive distribution of $\tilde{y} $ is 
\begin{equation}
f^{n}_{\tilde{X}}(\tilde{y})=\sum_S \pi^n(S) f^n_{\tilde{X}}(\tilde{y}\vert S).
\end{equation}
For this predictive distribution, we have similar Bernstein von-Mises Theorem  as Theorem \ref{bernstein}. The proof is  similar.

\begin{theorem}
If $s^{\star}=o(\sqrt{n}/\log n)$ and $E_{\beta^{\star}}\pi^n(S^{\star})\to 1$, then
\begin{displaymath}
E_{\beta^{\star}}H^2(f^{n}_{\tilde{X}},N_d(\tilde{X}_{S^\star} \hat{\beta}_{S^\star},\sigma_0^2[I_d+\frac{1}{\alpha +\gamma }\tilde{X}_{S^\star}(\tilde{X}_{S^\star}^T\tilde{X}_{S^\star})^{-1}\tilde{X}_{S^\star}^T])) \to 0
\end{displaymath}
uniformly in $\beta^{\star}$ as $n \to \infty$.
 
Since $d_{TV}(\cdot) \le H^2(\cdot)$, we also have
\begin{displaymath}
E_{\beta^{\star}}d_{TV}(f^{n}_{\tilde{X}},N_d(\tilde{X}_{S^\star} \hat{\beta}_{S^\star},\sigma_0^2[I_d+\frac{1}{\alpha +\gamma }\tilde{X}_{S^\star}(\tilde{X}_{S^\star}^T\tilde{X}_{S^\star})^{-1}\tilde{X}_{S^\star}^T])) \to 0
\end{displaymath}
 uniformly in $\beta^{\star}$ as $n \to \infty$. \label{bernstein2}
\end{theorem}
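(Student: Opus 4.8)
The plan is to mirror the proof of Theorem~\ref{bernstein}, but with the normal/$t$ comparison performed in dimension $d$ instead of $s^\star$. Write $\Sigma_0 = I_d + (\alpha+\gamma)^{-1}\tilde X_{S^\star}(\tilde X_{S^\star}^T\tilde X_{S^\star})^{-1}\tilde X_{S^\star}^T$ and let $g=N_d(\tilde X_{S^\star}\hat\beta_{S^\star},\sigma_0^2\Sigma_0)$ be the target law. First I would regard $g=\sum_S\pi^n(S)\,g$ as a trivial mixture and invoke the joint convexity of the squared Hellinger distance, together with $H^2\le1$, to get, for every realization of the data,
\[
H^2\!\big(f^n_{\tilde X},g\big)\le\sum_S\pi^n(S)\,H^2\!\big(f^n_{\tilde X}(\cdot\mid S),g\big)\le\pi^n(S^\star)\,H^2\!\big(f^n_{\tilde X}(\cdot\mid S^\star),g\big)+\big(1-\pi^n(S^\star)\big).
\]
Taking $E_{\beta^\star}$, the last term tends to $0$ by hypothesis, so it remains to show $E_{\beta^\star}H^2\!\big(f^n_{\tilde X}(\cdot\mid S^\star),g\big)\to0$.

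By the explicit description preceding the theorem, $f^n_{\tilde X}(\cdot\mid S^\star)$ is the multivariate $t$ with $\nu_n=2a_0+\alpha n$ degrees of freedom, location $\tilde X_{S^\star}\hat\beta_{S^\star}$ --- which already coincides with that of $g$ --- and scale matrix $c_n\Sigma_0$, where $c_n=(b_0+\tfrac\alpha2\|y-\hat y_{S^\star}\|^2)/(a_0+\tfrac{\alpha n}2)$. Inserting the intermediate Gaussian $N_d(\tilde X_{S^\star}\hat\beta_{S^\star},c_n\Sigma_0)$, applying the triangle inequality for the Hellinger metric together with $(u+v)^2\le2u^2+2v^2$, and then using the affine invariance of the Hellinger distance to strip off the common location and the matrix $\Sigma_0$ (a multivariate $t$ and a Gaussian are carried into laws of the same type by an invertible affine map), I get
\[
H^2\!\big(f^n_{\tilde X}(\cdot\mid S^\star),g\big)\le2\,H^2\!\big(t_{\nu_n}(0,I_d),N_d(0,I_d)\big)+2\,H^2\!\big(N_d(0,c_nI_d),N_d(0,\sigma_0^2I_d)\big).
\]
The first term is now deterministic, depends only on $(d,\nu_n)$, and is $O(d/\nu_n)\to0$ since $\nu_n\to\infty$ and $d$ is fixed. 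For the second term one has the closed form $H^2\!\big(N_d(0,c_nI_d),N_d(0,\sigma_0^2I_d)\big)=1-\big(2\sqrt{c_n}\,\sigma_0/(c_n+\sigma_0^2)\big)^{d/2}$, which is $O\!\big(d\,(c_n-\sigma_0^2)^2\big)$ near $c_n=\sigma_0^2$, so it suffices to control $c_n$.

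Finally, since $S^\star$ contains the true support, $X\beta^\star=X_{S^\star}\beta^\star_{S^\star}$ lies in the column space of $X_{S^\star}$; hence $\|y-\hat y_{S^\star}\|^2=\sigma_0^2\|(I-P_{S^\star})z\|^2\sim\sigma_0^2\chi^2_{n-s^\star}$ with $P_{S^\star}$ the orthogonal projection onto that column space, so $c_n=(b_0+\tfrac\alpha2\sigma_0^2\chi^2_{n-s^\star})/(a_0+\tfrac{\alpha n}2)$. Its numerator concentrates around $\tfrac\alpha2\sigma_0^2(n-s^\star)$ by Lemma~\ref{lemma1}, the denominator is $\tfrac{\alpha n}2(1+o(1))$, and $s^\star/n\to0$ (implied by $s^\star=o(\sqrt n/\log n)$), so $c_n\to\sigma_0^2$. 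To carry this through the expectation, fix small $\eta>0$ and split on the event $\{\,|\,\|(I-P_{S^\star})z\|^2-(n-s^\star)|\le\eta n\,\}$: on it $|c_n-\sigma_0^2|\le C\eta$ for large $n$, hence $H^2\!\big(N_d(0,c_nI_d),N_d(0,\sigma_0^2I_d)\big)\le C'd\eta^2$; on its complement, which by Lemma~\ref{lemma1} has probability at most $2\exp(-\eta^2n/4)$, bound $H^2\le1$; letting $\eta\downarrow0$ slowly with $n$ yields $E_{\beta^\star}H^2\!\big(N_d(0,c_nI_d),N_d(0,\sigma_0^2I_d)\big)\to0$. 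Everything here is uniform in $\beta^\star$ because, after the affine reduction, all remaining quantities are distribution-free in $\beta^\star$; combining the three displays proves the Hellinger assertion, and the total variation assertion follows from $d_{TV}(\cdot)\le H^2(\cdot)$ as in Theorem~\ref{bernstein}. The main obstacle is the quantitative $t$-versus-Gaussian Hellinger bound combined with the concentration of the random scale $c_n$; the mixture reduction and the exact matching of the locations are soft, and uniformity in $\beta^\star$ comes for free.
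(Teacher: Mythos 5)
Your argument is correct in substance, and its skeleton — decompose the predictive as a mixture over $S$, use convexity of $H^2$ plus boundedness to discard all $S\neq S^\star$ via $E_{\beta^\star}\pi^n(S^\star)\to 1$, then compare the surviving multivariate $t$ to the target Gaussian — is exactly the skeleton the paper intends (it gives no separate proof of this theorem, saying only that it is "similar" to Theorem \ref{bernstein}). Where you genuinely depart from the paper is in the core $t$-versus-normal step: the paper's proof of Theorem \ref{bernstein} lower-bounds the Hellinger affinity $\int\sqrt{fg}$ directly, carrying out the Gaussian integral explicitly and tracking the resulting ratio $\Gamma(a_0+\alpha n/2+s^\star/2)/\Gamma(a_0+\alpha n/2)(\alpha n/2)^{-s^\star/2}$, which is why it needs $s^\star=o(\sqrt n/\log n)$ there; you instead insert the intermediate Gaussian $N_d(\tilde X_{S^\star}\hat\beta_{S^\star},c_n\Sigma_0)$, use the triangle inequality and affine invariance, and split the problem into a deterministic $t_{\nu_n}$-vs-$N$ comparison plus a variance-matching step driven by concentration of $\|y-\hat y_{S^\star}\|^2$. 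Your route is cleaner here because the relevant dimension $d$ is fixed, so you only need the qualitative fact $H^2(t_{\nu_n}(0,I_d),N_d(0,I_d))\to 0$ (your asserted rate $O(d/\nu_n)$ is plausible but unproved; it is not needed, and if $d$ were allowed to grow you would have to prove it), and you correctly identify that the scale-concentration step only uses $s^\star=o(n)$, which already follows from (A1)--(A3). Two cosmetic points: you use the normalization $H^2\le 1$ while the paper uses $H^2\le 2$ (harmless, but be consistent with the paper's $d_{TV}\le H^2$ convention), and the uniformity claim should note that the Gaussian-vs-Gaussian bound is uniform because (A1) keeps $\sigma_0^2$ in $[d_1,d_2]$.
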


\section{Proofs}
\textbf{Proof of Theorem \ref{theorem1}:}\\
Let $u_S=\vert S \cup S^{\star}\vert$, $R_{n,S}=\{y: y^T(I-P_{S \cup S^{\star}})y/\sigma^2_0 \ge \frac{\alpha+1}{2}(n-u_S)\}$.
\begin{equation}
\begin{split}
 &\Pi^n(B_n(\Delta_n))
 =  \frac{\sum_{\Delta_n \le \vert S\vert \le R }\int_{R^{\vert S\vert}}D^n_S(d\beta_S)}{D_n}    \\
 = & \frac{\sum_{\Delta_n \le \vert S\vert \le R}  \pi(S) \Gamma(\frac{n \alpha}{2} +a_0)(\frac{\gamma}{\gamma+\alpha})^{\frac{\vert S\vert}{2}}[\frac{1}{b_0+\frac{\alpha \Vert y-\hat{y}_S \Vert^2}{2}}]^{\frac{n \alpha}{2} +a_0}(\mathbbm{1}_{R_{n,S}}+\mathbbm{1}_{R^c_{n,S}})}{\sum_S  \pi(S) \Gamma(\frac{n \alpha}{2} +a_0)(\frac{\gamma}{\gamma+\alpha})^{\frac{\vert S\vert}{2}}[\frac{1}{b_0+\frac{\alpha \Vert y-\hat{y}_S \Vert^2}{2}}]^{\frac{n \alpha}{2} +a_0}}\\
 \le & \sum_{\Delta_n \le \vert S\vert \le R} \mathbbm{1}_{R^c_{n,S}} \\
 +& \sum_{\Delta_n \le \vert S\vert \le R} \frac{\pi(S)}{\pi(S^{\star})}(\frac{\gamma}{\alpha+\gamma})^{\vert S\vert/2-s^{\star}/2}[1+ \frac{\frac{\alpha}{2} y^T(P_{S}-P_{S^{\star}})y}{b_0+\frac{\alpha}{2} \parallel y-\hat{y}_{S}\parallel^2}]^{n\alpha/2+a_0}\cdot \mathbbm{1}_{R_{n,S}}\\
 \le & \sum_{\Delta_n \le \vert S\vert \le R} \mathbbm{1}_{R^c_{n,S}} \\
 +& \sum_{\Delta_n \le \vert S\vert \le R} \frac{\pi(S)}{\pi(S^{\star})}(\frac{\gamma}{\alpha+\gamma})^{\vert S\vert/2-s^{\star}/2}[1+ \frac{\frac{\alpha}{2} y^T(P_{S \cup S^{\star}}-P_{S^{\star}})y}{\frac{\alpha}{2} y^T(I- P_{S \cup S^{\star}})y}]^{n\alpha/2+a_0} \cdot \mathbbm{1}_{R_{n,S}}\\
 \le & \sum_{\Delta_n \le \vert S\vert \le R} \mathbbm{1}_{R^c_{n,S}} \\
 +& \sum_{\Delta_n \le \vert S\vert \le R} \frac{\pi(S)}{\pi(S^{\star})}(\frac{\gamma}{\alpha+\gamma})^{\vert S\vert/2-s^{\star}/2}[1+ \frac{ y^T(P_{S \cup S^{\star}}-P_{S^{\star}})y/\sigma_0^2}{\frac{\alpha+1}{2}(n-u_S)}]^{n\alpha/2+a_0} .
\end{split} \label{split B(delta)}
\end{equation}
Let $U_1$, $U_2$ denote the two terms on the right hand side of (\ref{split B(delta)}). It suffices to show  that there exists constant $C > 1$, $G>0$ such that  $E_{\beta^{\star}} U_i \preceq \exp \{-Gs^{\star} \log(p/s^{\star})\}$
 with $\Delta_n =Cs^{\star}$  uniformly in $\beta^{\star}$ for $i=1,2$.
 
Since
\begin{equation}
y^T(I-P_{S \cup S^{\star}})y/\sigma^2_0 \sim \chi^2_{n- u_S},
\end{equation}
by Lemma \ref{lemma1} we have 
\begin{equation}
\begin{split}
E_{\beta^{\star}} U_1 & = \sum_{\Delta_n \le \vert S\vert \le R} P(R^c_{n,S}) \le \sum_{\Delta_n \le \vert S\vert \le R}2 \exp \{-\frac{(1-\alpha)^2(n-u_S)}{16}\} \\
\preceq & \sum_{\Delta_n \le \vert S\vert \le R} 2 \exp \{-\frac{(1-\alpha)^2}{20}n\} \le R {p \choose R}2 \exp \{-\frac{(1-\alpha)^2}{20}n\}\\
\le & 2R p^R  \exp \{-\frac{(1-\alpha)^2}{20}n\} \preceq \exp \{-\frac{(1-\alpha)^2}{30}n\}.
 \end{split} \label{U_1 part}
\end{equation}

Let $Z_1 = y^T(P_{S \cup S^{\star}}-P_{S^{\star}})y/ \sigma^2_0$, then $Z_1 \sim \chi^2_{u_S-s^{\star}}$. By $1+x \le \exp(x)$,
\begin{equation}
\begin{split}
 [1+ \frac{ y^T(P_{S \cup S^{\star}}-P_{S^{\star}})y/\sigma_0^2}{\frac{\alpha+1}{2}(n-u_S)}]^{n\alpha/2+a_0} 
\le  \exp \{Z_1 \frac{(n\alpha/2+a_0)}{ \frac{\alpha+1}{2}(n-u_S)}\}
\preceq & \exp \{ \frac{\alpha Z_1}{1+\alpha}\}.
\end{split} 
\end{equation}
Using the mgf of a chi-squared distribution,
\begin{equation}
\begin{split}
& E_{\beta^{\star}} U_2  \\
 \le  &\sum_{\Delta_n \le \vert S\vert \le R} \frac{\pi(S)}{\pi(S^{\star})}(\frac{\gamma}{\alpha+\gamma})^{\vert S\vert/2-s^{\star}/2} E_{\beta^{\star}}\exp \{ \frac{\alpha Z_1}{1+\alpha}\}\\
 = & \sum_{\Delta_n \le \vert S\vert \le R} \frac{\pi(S)}{\pi(S^{\star})}(\frac{\gamma}{\alpha+\gamma})^{\vert S\vert/2-s^{\star}/2}(1-\frac{2 \alpha}{1+\alpha})^{-(\vert S \cup S^{\star} \vert-s^{\star})}\\
 \le & \sum_{\Delta_n \le \vert S\vert \le R} \frac{\pi(S)}{\pi(S^{\star})}(\frac{\gamma}{\alpha+\gamma})^{-s^{\star}/2}(\frac{1-\alpha}{1+\alpha})^{-\vert S\vert}\\
 = & \exp({cs^{\star}}) \frac{{p \choose s^{\star} }}{f_n(s^{\star})}\sum_{s= \Delta_n}^{R}\phi^sf_n(s),
 \end{split}
\end{equation}
where $\phi = (\frac{1-\alpha}{1+\alpha})^{-1} >1$. 

By \citep{Martin:2017} we have
$log\{\frac{{p \choose s^{\star} }}{f_n(s^{\star})}\sum_{s}\phi^sf_n(s)\} = O(s^{\star} log(p/s^{\star}))$. Also since $\phi >1$, $\sum_{s}\phi^sf_n(s)>1$. Hence
\begin{equation}
\begin{split}
&E_{\beta^{\star}} U_2 \\
\le &  \{\exp ({cs^{\star}}) \frac{{p \choose s^{\star} }}{f_n(s^{\star})}\sum_{s= \Delta_n}^{R}\phi^sf_n(s)\} \cdot \sum_{s}\phi^sf_n(s) \\
\le &\exp \{cs^{\star}+s^{\star} log(p/s^{\star})\}\sum_{s= \Delta_n}^{R}\phi^sf_n(s).
\end{split}
\end{equation}
From the expression of $f_n(s)$, we get 
\begin{equation}
\sum_{s= \Delta_n}^{R}\phi^sf_n(s)=O((\frac{\phi}{cp^a})^{\Delta_n+1}).
\end{equation}
and
\begin{displaymath}
(\frac{\phi}{cp^a})^{\Delta_n+1}=\exp \{-(\Delta_n+1)[a \log p + \log(c/\phi)]\}.
\end{displaymath}
So when $\Delta_n =Cs^{\star}$ , $C>a^{-1}$ and $G^{\prime} =\frac{aC-1}{2}$, 
\begin{equation}
E_{\beta^{\star}} U_2 \preceq \exp \{-G^{\prime} s^{\star} \log(p/s^{\star})\} \to 0 \label{U_2 part}
\end{equation} 
as $n \to \infty$.

By (\ref{U_1 part}) and (\ref{U_2 part}), when $\Delta_n =Cs^{\star}$ , $C>a^{-1}$ and $G =\frac{aC-1}{4}$, we have 
\begin{displaymath}
E_{\beta^{\star}}\{ \Pi^n(B_n(\Delta_n))\} \preceq \exp \{-Gs^{\star} \log(p/s^{\star})\}\to 0
\end{displaymath}
with $\Delta_n =Cs^{\star}$, uniformly in $\beta^{\star}$ as $n \to \infty$.\\
\\
\\
\textbf{Proof of Theorem \ref{theorem2}:} \\
Since 
\begin{displaymath}
E_{\beta^{\star}}\{ \Pi^n(\beta:S_{\beta} \supset S^{\star} )\}
=  \sum_{S:S \supset S^{\star}}E_{\beta^{\star}}\pi^n(S) ,
\end{displaymath}
by Theorem \ref{theorem1}, it suffices to show that
\begin{equation}
 \sum_{S:S \supset S^{\star},\vert S\vert \le Cs^{\star}}E_{\beta^{\star}}\pi^n(S) \to 0.
\end{equation}

Let  $\tilde{R}_{n,S}=\{y: y^T(I-P_{S})y/\sigma^2_0 \ge \frac{\alpha+1}{2}(n-s)\}$.
For $S \supset S^{\star}$, by $1+x \le \exp(x)$,
\begin{equation}
\begin{split}
\pi^n(S) \le & \mathbbm{1}_{\tilde{R}^c_{n,S}}+\frac{\pi^n(S)}{\pi^n(S^{\star})}\mathbbm{1}_{\tilde{R}_{n,S}}\\
\le & \mathbbm{1}_{\tilde{R}^c_{n,S}}+\frac{\pi(S)}{\pi(S^{\star})}(\frac{\gamma}{\alpha+\gamma})^{\vert S\vert/2-s^{\star}/2} \exp \{[y^T(P_{S }-P_{S^{\star}})y/\sigma_0^2] \frac{(n\alpha/2+a_0)}{ \frac{\alpha+1}{2}(n-s)}\}.
\end{split}
\end{equation}

Since $y^T(P_{S }-P_{S^{\star}})y/\sigma_0^2 \sim \chi^2_{s-s^{\star}}$ and $y^T(I-P_{S})y/\sigma^2_0 \sim \chi^2_{n-s}$, by Lemma \ref{lemma1} and mgf formula for chi-squared distribution, we have 
\begin{equation}
E_{\beta^{\star}}\pi^n(S) \le P(\tilde{R}^c_{n,S})+ \frac{\pi(S)}{\pi(S^{\star})}(\frac{\gamma}{\alpha+\gamma})^{\vert S\vert/2-s^{\star}/2} (\frac{1-\alpha}{1+\alpha})^{-(\vert S\vert-s^{\star})}.
\end{equation}
Let $z=(\frac{\gamma}{\alpha+\gamma})^{1/2}(\frac{1-\alpha}{1+\alpha})^{-1}$. Hence
\begin{equation}
\begin{split}
 & \sum_{S:S \supset S^{\star},\vert S\vert \le Cs^{\star}}E_{\beta^{\star}}\pi^n(S) \le \sum_{S:S \supset S^{\star},\vert S\vert \le Cs^{\star}}P(\tilde{R}^c_{n,S})+\sum_{S:S \supset S^{\star},\vert S\vert \le Cs^{\star}}\frac{\pi(S)}{\pi(S^{\star})} z^{\vert S\vert-s^{\star}}\\
&\preceq \exp \{-\frac{(1-\alpha)^2}{30}n \} + \sum_{s^{\star}<s \le Cs^{\star}} \frac{{p-s^{\star} \choose p-s}{p \choose s^{\star}}}{{p \choose s}}(\frac{z}{cp^a})^{s-s^{\star}} \\
& = \exp \{-\frac{(1-\alpha)^2}{30}n \}+\sum_{s^{\star}<s \le Cs^{\star}} {s \choose s-s^{\star}}(\frac{z}{cp^a})^{s-s^{\star}}\\
&\le  \exp \{-\frac{(1-\alpha)^2}{30}n\}+\sum_{s^{\star}<s \le Cs^{\star}} s^{s-s^{\star}}(\frac{z}{cp^a})^{s-s^{\star}} \\
&  \le \exp \{-\frac{(1-\alpha)^2}{30}n\}+\sum_{s^{\star}<s \le Cs^{\star}} (\frac{Czs^{\star}}{cp^a})^{s-s^{\star}} \\
&\le \exp \{-\frac{(1-\alpha)^2}{30}n\}+\frac{Czs^{\star}}{cp^a} \times O(1).
\end{split}
\end{equation}
Then RHS  goes to 0, when $p^a  \gg s^{\star}$.\\
\\
\\
\textbf{Proof of Theorem \ref{theorem3}:}\\
Since
\begin{displaymath}
E_{\beta^{\star}} \{\Pi^n(\beta : S_{\beta} \nsupseteq S^{\star})\} = \sum_{S:S \nsupseteq S^{\star}}E_{\beta^{\star}}\pi^n(S),
\end{displaymath}
by Theorem \ref{theorem1}, it suffices to show 
\begin{equation}
\sum_{S:S \nsupseteq S^{\star},\vert S\vert \le Cs^{\star}}E_{\beta^{\star}}\pi^n(S) \to 0 . \label{sum formula}
\end{equation}

Define 
\begin{equation}
\begin{split}
&L_{n,S} \\
= & \{y: \frac{(\alpha+1)(n-\vert S\vert)}{2}\le \parallel y-\hat{y}_{S}\parallel^2 / {\sigma_0}^2 \le 2(n-\vert S\vert)+ 2\parallel (I-P_S)X\beta^{\star}\parallel^2  /{\sigma_0}^2 \},
\end{split}
\end{equation}

\begin{equation}
\begin{split}
\pi^n(S) & \le  \mathbbm{1}_{L^c_{n,S}}+\frac{\pi^n(S)}{\pi^n(S^{\star})}\mathbbm{1}_{L_{n,S}}\\
& = \mathbbm{1}_{L^c_{n,S}}+\frac{\pi(S)}{\pi(S^{\star})}(\frac{\gamma}{\alpha+\gamma})^{\vert S\vert/2-s^{\star}/2}[\frac{b_0+\frac{\alpha}{2} \parallel y-\hat{y}_{S^{\star}}\parallel^2}{b_0+\frac{\alpha}{2} \parallel y-\hat{y}_{S}\parallel^2}]^{n\alpha/2+a_0}\mathbbm{1}_{L_{n,S}}. \label{pi_n(S) general inequality}
\end{split}
\end{equation}

By $1+x \le \exp(x)$,
\begin{equation}
\begin{split}
&[\frac{b_0+\frac{\alpha}{2} \parallel y-\hat{y}_{S^{\star}}\parallel^2}{b_0+\frac{\alpha}{2} \parallel y-\hat{y}_{S}\parallel^2}]^{n\alpha/2+a_0}\mathbbm{1}_{L_{n,S}}\\
= & [1+\frac{\frac{\alpha}{2} y^T(P_S-P_{S^{\star}})y}{b_0+\frac{\alpha}{2} \parallel y-\hat{y}_{S}\parallel^2}]^{n\alpha/2+a_0}\mathbbm{1}_{L_{n,S}}\\
\le & \exp\{\frac{\alpha}{2} y^T(P_S-P_{S^{\star}})y \frac{n\alpha/2+a_0}{b_0+\frac{\alpha}{2} \parallel y-\hat{y}_{S}\parallel^2}\}\mathbbm{1}_{L_{n,S}}\\
\le & \exp\{ y^T(P_S-P_{S^{\star}})y \frac{\alpha(n\alpha/2+a_0)}{2b_0+\alpha(\alpha+1)\sigma_0^2 (n-\vert S\vert)}\} \\
&+\exp\{ y^T(P_S-P_{S^{\star}})y \cdot \frac{\alpha(n\alpha/2+a_0)}{2b_0+2\alpha{\sigma_0}^2[2(n-\vert S\vert)+ 2\parallel (I-P_S)X\beta^{\star}\parallel^2  /{\sigma_0}^2]}\} .\label{ratio control}
\end{split}
\end{equation}

Since 
\begin{displaymath}
\parallel y-\hat{y}_{S}\parallel^2 / {\sigma_0}^2 \sim \chi^2_{n-s} (\parallel (I-P_S)X\beta^{\star}\parallel^2  /{\sigma_0}^2), 
\end{displaymath}
\noindent
\\
by Lemma \ref{lemma2} we have 
\begin{equation}
\begin{split}
& \sum_{\vert S\vert \le Cs^{\star}} P(L^c_{n,S}) \le \sum_{\vert S\vert \le Cs^{\star}}\{\exp \{-\frac{n-\vert S\vert}{10}\}+2 \exp [-\frac{(1-\alpha)^2(n-\vert S\vert)}{16}]\}\\
&\preceq \exp \{-\frac{(1-\alpha)^2 n}{20}\} \sum_{ s \le Cs^{\star}}{p \choose s} \le \exp \{-\frac{(1-\alpha)^2 n}{20}\}Cs^{\star} p^{Cs^{\star}} \preceq \exp\{-\frac{(1-\alpha)^2 n}{30}\} \to 0 ,
\end{split} \label{Q_n complement}
\end{equation}
as $n \to \infty$.

Plugging $y=X \beta^{\star}+\sigma_0 \epsilon$ into $y^T(P_S-P_{S^{\star}})y$, where $\epsilon \sim N_n(0,I)$, we get 
\begin{displaymath}
-\parallel(I-P_S)X\beta^{\star}\parallel^2-2\sigma_0 \epsilon^T(I-P_S)X\beta^{\star}+\sigma^2_0 \epsilon^T(P_S-P_{ S^{\star}}) \epsilon.
\end{displaymath}
Bound the right-most quadratic form above as follows,
\begin{displaymath}
\epsilon^T(P_S-P_{S^{\star}}) \epsilon = \epsilon^T(P_S-P_{S \cap S^{\star}}) \epsilon-\epsilon^T(P_{S^{\star}}-P_{S \cap S^{\star}}) \epsilon \le \epsilon^T(P_S-P_{S \cap S^{\star}}) \epsilon,
\end{displaymath} 
so
\begin{displaymath}
y^T(P_S-P_{S^{\star}})y \le -\parallel(I-P_S)X\beta^{\star}\parallel^2-2\sigma_0 \epsilon^T(I-P_S)X\beta^{\star}+\sigma^2_0 \epsilon^T(P_S-P_{S \cap S^{\star}}) \epsilon.
\end{displaymath}
We also observe that $(I-P_S)(P_S-P_{S \cap S^{\star}})=0$, which implies that 
\begin{displaymath}
\epsilon^T(I-P_S)X\beta^{\star} \perp \epsilon^T(P_S-P_{S \cap S^{\star}}) \epsilon.
\end{displaymath}
Then by the mgf of normal and chi-squared distribution, we have
\begin{equation}
E_{\beta^{\star}}\exp \{\frac{l}{2\sigma_0^2} \{y^T(P_S-P_{S^{\star}})y\}\} \le (1-l)^{-\frac{1}{2}(\vert S\vert-\vert S \cap S^{\star}\vert)} \exp\{-\frac{l(1-l)}{2 \sigma_0^2} \parallel (I-P_S)X \beta ^{\star} \parallel^2 \}.
\end{equation}
Hence,
\begin{equation}
\begin{split}
&E_{\beta^{\star}} \{[\frac{b_0+\frac{\alpha}{2} \parallel y-\hat{y}_{S^{\star}}\parallel^2}{b_0+\frac{\alpha}{2} \parallel y-\hat{y}_{S}\parallel^2}]^{n\alpha/2+a_0}\mathbbm{1}_{L_{n,S}}\}\\
\le & E_{\beta^{\star}} \exp\{ y^T(P_S-P_{S^{\star}})y \frac{\alpha(n\alpha/2+a_0)}{2b_0+\alpha(\alpha+1)\sigma_0^2 (n-\vert S\vert)}\} \\
&+E_{\beta^{\star}} \exp\{ y^T(P_S-P_{S^{\star}})y \cdot \frac{\alpha(n\alpha/2+a_0)}{2b_0+2\alpha{\sigma_0}^2[2(n-\vert S\vert)+ 2\parallel (I-P_S)X\beta^{\star}\parallel^2  /{\sigma_0}^2}\} \\
\le &  \sum_{i=1}^{2}(1-\alpha_i)^{-\frac{1}{2}(\vert S\vert-\vert S \cap S^{\star}\vert)}\exp\{-\frac{\alpha_i(1-\alpha_i)}{2 \sigma_0^2}\parallel (I-P_S)X\beta^{\star}\parallel^2\},
\end{split}\label{Qn part control}
\end{equation}
\noindent
where $\alpha_1=\frac{2 \sigma_0^2 \alpha(n\alpha/2+a_0)}{2b_0+\alpha(\alpha+1)\sigma_0^2 (n-\vert S\vert)} \le \frac{\alpha}{1+\alpha}$, $\alpha_2=\frac{2 \sigma_0^2 \alpha(n\alpha/2+a_0)}{2b_0+2\alpha{\sigma_0}^2[2(n-\vert S\vert)+ 2\parallel (I-P_S)X\beta^{\star}\parallel^2  /{\sigma_0}^2]} \le \frac{\alpha}{4}$.

Since 
\begin{displaymath}
\parallel (I-P_S)X\beta^{\star}\parallel^2=\parallel (I-P_S)X_{S\cap S^{\star}}\beta_{S\cap S^{\star}}^{\star}\parallel^2,
\end{displaymath}
it follows from Lemma 5 of  \citep{Arias-Castro:2014} that 
\begin{displaymath}
\parallel (I-P_S)X\beta^{\star}\parallel^2 \ge \kappa(S\cup S^{\star})^2(\vert S^{\star}\vert-\vert S\cap S^{\star}\vert)\rho_n^2.
\end{displaymath}
Next, we have $\vert S\cup S^{\star}\vert\le \vert S\vert+ \vert S^{\star}\vert\le C^\prime \vert S^{\star}\vert$. By the monotonicity of $\kappa$  we also have 
\begin{equation}
\parallel (I-P_S)X\beta^{\star}\parallel^2 \ge \kappa(C^\prime s^{\star})^2(\vert S^{\star}\vert-\vert S\cap S^{\star}\vert)\rho_n^2 \ge (M^\prime)^2\kappa(C^\prime s^{\star})^2(\vert S^{\star}\vert-\vert S\cap S^{\star}\vert)\log p
\end{equation}
Then 
\begin{displaymath}
\begin{split}
\frac{\alpha_1(1-\alpha_1)}{2 \sigma_0^2}\parallel (I-P_S)X\beta^{\star}\parallel^2  \ge & \frac{\alpha}{2(\alpha+1)^2 d_2}(M^\prime)^2\kappa(C^\prime s^{\star})^2(\vert S^{\star}\vert-\vert S\cap S^{\star}\vert)\log p \\
> & M(\vert S^{\star}\vert-\vert S\cap S^{\star}\vert)\log p,
\end{split}
\end{displaymath}

\begin{displaymath}
\begin{split}
&\frac{\alpha_2(1-\alpha_2)}{2 \sigma_0^2}\parallel (I-P_S)X\beta^{\star}\parallel^2  
\ge \frac{3\alpha_2}{8 \sigma_0^2}\parallel (I-P_S)X\beta^{\star}\parallel^2 \\
\ge &\frac{3\sigma_0^2 \alpha(n\alpha/2+a_0)(M^\prime)^2\kappa(C^\prime s^{\star})^2(\vert S^{\star}\vert-\vert S\cap S^{\star}\vert)\log p}{8b_0+8\alpha{\sigma_0}^2[2(n-\vert S\vert)+ 2(M^\prime)^2\kappa(C^\prime s^{\star})^2(\vert S^{\star}\vert-\vert S\cap S^{\star}\vert)\log p  /{\sigma_0}^2]} \\
\ge &  \frac{(3n \alpha^2 /2)(M^\prime)^2\kappa(C^\prime s^{\star})^2(\vert S^{\star}\vert-\vert S\cap S^{\star}\vert)\log p}{18\alpha{\sigma_0}^2[n+ s^{\star}\log p \cdot (24 d_2 M)/(d_1 \alpha)]} \\
\ge & \frac{\alpha}{12  d_2} (M^\prime)^2\kappa(C^\prime s^{\star})^2(\vert S^{\star}\vert-\vert S\cap S^{\star}\vert)\log p = M(\vert S^{\star}\vert-\vert S\cap S^{\star}\vert)\log p.
\end{split}
\end{displaymath}

By(\ref{pi_n(S) general inequality})(\ref{ratio control})(\ref{Q_n complement})(\ref{Qn part control}), we have
\begin{equation}
\begin{split}
&\sum_{S:S \nsupseteq S^{\star},\vert S\vert \le Cs^{\star}}E_{\beta^{\star}}\pi^n(S) \\
\le&  \sum_{S:S \nsupseteq S^{\star},\vert S\vert \le Cs^{\star}}  E_{\beta^{\star}}\{\mathbbm{1}_{L^c_{n,S}}+\frac{\pi(S)}{\pi(S^{\star})}(\frac{\gamma}{\alpha+\gamma})^{\vert S\vert/2-s^{\star}/2}[\frac{b_0+\frac{\alpha}{2} \parallel y-\hat{y}_{S^{\star}}\parallel^2}{b_0+\frac{\alpha}{2} \parallel y-\hat{y}_{S}\parallel^2}]^{n\alpha/2+a_0}\mathbbm{1}_{L_{n,S}}\}\\
\preceq & \exp \{-\frac{(1-\alpha)^2 n}{30}\}+\sum_{S:S \nsupseteq S^{\star},\vert S\vert \le Cs^{\star}}2\frac{\pi(S)}{\pi(S^{\star})}\nu^{s^{\star}-\vert S\vert}({\sqrt{2}}p^{-M})^{\vert S^{\star}\vert-\vert S\cap S^{\star}\vert},
\end{split}
\end{equation}
where $\nu = (\frac{2\gamma}{\alpha+\gamma})^{-1/2}$.

Plug in the  prior of $S$,  and let $t$ be the number of variables in   $S \cap S^\star$. We get
\begin{displaymath}
\begin{split}
&\sum_{S:S \nsupseteq S^{\star},\vert S\vert \le Cs^{\star}} \frac{\pi(S)}{\pi(S^{\star})}\nu^{s^{\star}-\vert S\vert}({\sqrt{2}}p^{-M})^{\vert S^{\star}\vert-\vert S\cap S^{\star}\vert} \\
\le & \sum_{s=0}^{C s^{\star}} \sum_{t=1} ^{\min \{s, s^\star\}} \frac{{s^\star \choose t}{{p-s^\star} \choose {s-t} }{p \choose s^\star}}{{p \choose s}}(\nu c p^a)^{s^\star-s}(\sqrt{2}p^{-M})^{s^\star-t}\\
= & \sum_{s=0}^{C s^{\star}} \sum_{t=1} ^{\min \{s, s^\star\}} {S \choose t}{{p-s} \choose {s^\star -t}}(\nu c p^a)^{s^\star-s}(\sqrt{2}p^{-M})^{s^\star-t}\\
\leq & \sum_{s=0}^{C s^{\star}} \sum_{t=1} ^{\min \{s, s^\star\}}  s^{s-t} p^{s^\star -t}(\nu c p^a)^{s^\star-s}(\sqrt{2}p^{-M})^{s^\star-t} \\
= & \sum_{s=0}^{s^\star -1} \sum_{t=1} ^{s}  s^{s-t} p^{s^\star -t}(\nu c p^a)^{s^\star-s}(\sqrt{2}p^{-M})^{s^\star-t} \\
&+ \sum_{s=s ^ \star}^{C s^{\star}} \sum_{t=1} ^{ s^\star}  s^{s-t} p^{s^\star -t}(\nu c p^a)^{s^\star-s}(\sqrt{2}p^{-M})^{s^\star-t} \\
= & \sum_{s=0}^{s^\star -1} \sum_{t=1} ^{s}  (\nu c p^a /s)^{s^\star-s}(\sqrt{2}s p^{1-M})^{s^\star-t} 
+ \sum_{s=s ^ \star}^{C s^{\star}} \sum_{t=1} ^{ s^\star}  (\nu c p^a/s)^{s^\star-s}(\sqrt{2}sp^{1-M})^{s^\star-t} \\
\preceq & \sum_{s=0}^{s^\star -1} (\nu c p^{1+a-M} /s)^{s^\star-s}
+ s^ \star p^{1-M} \sum_{s=s ^ \star}^{C s^{\star}}   (\nu c p^a/s)^{s^\star-s} \to 0,
\end{split}
\end{displaymath}
which implies $\sum_{S:S \nsupseteq S^{\star},\vert S\vert \le Cs^{\star}}E_{\beta^{\star}}\pi^n(S) \to 0 $.

As we  see, $\parallel (I-P_S)X\beta^{\star}\parallel^2$ plays an important role in the proof of Theorem \ref{theorem3}. We modify this proof to get a useful result.
\begin{lemma}
Assume conditions (A1)-(A3) hold, define 
\begin{displaymath}
H_{S}^n=\{\beta^{\star} : \parallel (I-P_S)X\beta^{\star}\parallel^2 \ge Ks^{\star} \log (p/s^\star)\}, 
\end{displaymath}
$K>(2a+3)\frac{12 d_2}{\alpha }$ is a constant, with $a>0$ the constant in marginal prior of $S$, $C$  is  a constant as in Theorem \ref{theorem1}. Then there exists constant $G_1 >0$, such that
\begin{displaymath}
\sum_{S:S \nsupseteq S^{\star},\vert S\vert \le Cs^{\star}}\mathbbm{1}_{H_{S}^n}\cdot E_{\beta^{\star}} \pi^n(S) \preceq \exp \{-G_1 s^{\star} \log (p/s^{\star})\} \to 0
\end{displaymath}
uniformly in $\beta^{\star}$ as $n \to \infty$.\label{lemma3}
\end{lemma}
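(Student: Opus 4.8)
The plan is to rerun the proof of Theorem~\ref{theorem3} almost word for word; the only change is that the lower bound on $\parallel(I-P_S)X\beta^{\star}\parallel^2$ used there --- which was extracted from the assumption $\min_{j\in S^{\star}}\vert\beta^{\star}_j\vert\ge\rho_n$ together with the restricted--eigenvalue inequality invoked in that proof --- is here simply the inequality $\parallel(I-P_S)X\beta^{\star}\parallel^2\ge Ks^{\star}\log(p/s^{\star})$, which holds by the very definition of the event $H_S^n$. Since the sum in the statement is already restricted to $\{S:S\nsupseteq S^{\star},\ \vert S\vert\le Cs^{\star}\}$, no preliminary reduction via Theorem~\ref{theorem1} is needed.

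First I would keep the set $L_{n,S}$ and the pointwise bound (\ref{pi_n(S) general inequality}) of the proof of Theorem~\ref{theorem3} unchanged. For the $\mathbbm{1}_{L^c_{n,S}}$ part, multiplying by $\mathbbm{1}_{H_S^n}\le 1$ and invoking (\ref{Q_n complement}) gives $\sum_{S:S\nsupseteq S^{\star},\vert S\vert\le Cs^{\star}}\mathbbm{1}_{H_S^n}\,P(L^c_{n,S})\preceq\exp\{-(1-\alpha)^2 n/30\}$, which by (A2) (so that $s^{\star}\log(p/s^{\star})\le s^{\star}\log p=o(n)$) is $\preceq\exp\{-G_1 s^{\star}\log(p/s^{\star})\}$ for any fixed $G_1$ and $n$ large. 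For the other part I would apply (\ref{ratio control}) and the moment generating function bound (\ref{Qn part control}) exactly as derived; its right-hand side is $\sum_{i=1}^2(1-\alpha_i)^{-\frac12(\vert S\vert-\vert S\cap S^{\star}\vert)}\exp\{-\frac{\alpha_i(1-\alpha_i)}{2\sigma_0^2}\parallel(I-P_S)X\beta^{\star}\parallel^2\}$ with $\alpha_1\le\frac{\alpha}{1+\alpha}$, $\alpha_2\le\frac{\alpha}{4}$. Restricting to $H_S^n$, the same two estimates carried out in Theorem~\ref{theorem3} show $\frac{\alpha_i(1-\alpha_i)}{2\sigma_0^2}\parallel(I-P_S)X\beta^{\star}\parallel^2\ge\frac{\alpha}{12 d_2}Ks^{\star}\log(p/s^{\star})$ for both $i=1,2$ and $n$ large: for $i=1$ this is immediate from (A1) and $\vert S\vert\le Cs^{\star}=o(n)$, while for $i=2$ one substitutes $\parallel(I-P_S)X\beta^{\star}\parallel^2\ge Ks^{\star}\log(p/s^{\star})$ into the increasing map $v\mapsto v/(A+Bv)$ and uses $Ks^{\star}\log(p/s^{\star})=o(n)$ to keep the $\Theta(n)$ summand in the denominator of $\alpha_2$ dominant. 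Hence each term is $\le\exp\{-\frac{\alpha K}{12 d_2}s^{\star}\log(p/s^{\star})\}$, while $(1-\alpha_i)^{-\frac12(\vert S\vert-\vert S\cap S^{\star}\vert)}\le\exp\{O(s^{\star})\}$ because $\vert S\vert\le Cs^{\star}$.

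It remains to sum over $S$. Pulling the $S$-free factor $\exp\{-\frac{\alpha K}{12 d_2}s^{\star}\log(p/s^{\star})\}$ out front, what is left is $\sum_{S:S\nsupseteq S^{\star},\vert S\vert\le Cs^{\star}}\frac{\pi(S)}{\pi(S^{\star})}(\frac{\gamma}{\alpha+\gamma})^{\vert S\vert/2-s^{\star}/2}\exp\{O(s^{\star})\}$; inserting $\pi(S)={p\choose\vert S\vert}^{-1}f_n(\vert S\vert)$ with $f_n(s)\propto(cp^a)^{-s}$, summing the geometric series in $\vert S\vert$, and bounding the resulting binomials against ${p\choose s^{\star}}$, this is $\le 2{p\choose s^{\star}}(cp^a)^{s^{\star}}\exp\{O(s^{\star})\}=\exp\{c's^{\star}\log(p/s^{\star})\}$ for a constant $c'$ which, under $p\gg n$ and (A2)--(A3), satisfies $c'\le 2a+3$ (the dominant contributions are $\log{p\choose s^{\star}}\le s^{\star}\log(ep/s^{\star})$ and $as^{\star}\log p$, the latter of order $s^{\star}\log(p/s^{\star})$ in the regime $p\gg n$, every remaining factor contributing only $\exp\{O(s^{\star})\}$). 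Since $K>(2a+3)\,12 d_2/\alpha$, the product is therefore $\preceq\exp\{-G_1 s^{\star}\log(p/s^{\star})\}$ with $G_1:=\frac{\alpha K}{12 d_2}-(2a+3)>0$; combining with the $\mathbbm{1}_{L^c_{n,S}}$ estimate proves the lemma uniformly in $\beta^{\star}$.

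Because the argument is structurally just a rerun of the proof of Theorem~\ref{theorem3}, the one genuinely delicate point I anticipate is the $\alpha_2$-exponent: there $\parallel(I-P_S)X\beta^{\star}\parallel^2$ appears in both numerator and denominator, so ``making it large'' helps only through the monotonicity of $v\mapsto v/(A+Bv)$ and the fact that $Ks^{\star}\log(p/s^{\star})=o(n)$ keeps the denominator of order $n$. The secondary, purely bookkeeping, point is the calibration of $K$: unlike in Theorem~\ref{theorem3}, where a model-dependent factor $p^{-M}$ cancelled a model-dependent $p^{1+a}$, here the lower bound $Ks^{\star}\log(p/s^{\star})$ is uniform over $S$ and so must out-weigh the entire combinatorial sum $\exp\{c's^{\star}\log(p/s^{\star})\}$ in one stroke, which is exactly why the hypothesis asks for $K>(2a+3)\,12 d_2/\alpha$.
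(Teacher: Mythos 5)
Your proposal is correct and follows essentially the same route as the paper: it reuses the decomposition via $L_{n,S}$, the bounds (\ref{pi_n(S) general inequality}), (\ref{Q_n complement}) and (\ref{Qn part control}) from the proof of Theorem \ref{theorem3}, and simply substitutes the lower bound defining $H_S^n$ into the exponents to get the factor $(p/s^{\star})^{-M_0 s^{\star}}$ with $M_0=\frac{K\alpha}{12 d_2}$. The only difference is that you carry out the final combinatorial summation over $S$ explicitly (identifying why $K>(2a+3)\frac{12d_2}{\alpha}$ is the right calibration), a step the paper's proof leaves implicit.
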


\begin{proof}
By (\ref{pi_n(S) general inequality}), (\ref{Q_n complement}), (\ref{Qn part control}), for  $S \in \{S:S \nsupseteq S^{\star},\vert S\vert \le Cs^{\star}\}$,
\begin{equation}
\begin{split}
& \mathbbm{1}_{H_{S}^n} \cdot E_{\beta^{\star}}\pi^n(S) \le \mathbbm{1}_{H_{S}^n}\cdot E_{\beta^{\star}}[\pi^n(S) \mathbbm{1}_{Q_n}] +P(Q_n^c) \\
\le & \mathbbm{1}_{H_{S}^n} \cdot \frac{\pi(S)}{\pi(S^{\star})}(\frac{\gamma}{\alpha+\gamma})^{\frac{\vert S\vert}{2}-\frac{s^{\star}}{2}}\sum_{i=1}^{2}(1-\alpha_i)^{-\frac{1}{2}(\vert S\vert-\vert S \cap S^{\star}\vert)}\exp\{-\frac{\alpha_i(1-\alpha_i)}{2 \sigma_0^2}\parallel (I-P_S)X\beta^{\star}\parallel^2\} \\
&+ \exp \{-\frac{(1-\alpha)^2 n}{20}\}.
\end{split}
\end{equation}
\noindent
Then on $H_{S}^n$,
\begin{displaymath}
\begin{split}
\frac{\alpha_1(1-\alpha_1)}{2 \sigma_0^2}\parallel (I-P_S)X\beta^{\star}\parallel^2  \ge \frac{K\alpha}{2(\alpha+1)^2 d_2} s^{\star} \log (p/s^{\star}),
\end{split}
\end{displaymath}
\noindent
\begin{displaymath}
\begin{split}
\frac{\alpha_2(1-\alpha_2)}{2 \sigma_0^2}\parallel (I-P_S)X\beta^{\star}\parallel^2  \ge \frac{K\alpha}{12  d_2} s^{\star} \log (p/s^{\star}).
\end{split}
\end{displaymath}
So 
\begin{equation}
\mathbbm{1}_{H_{S}^n} \cdot E_{\beta^{\star}} [\pi^n(S)\mathbbm{1}_{Q_n}] \le
2\frac{\pi(S)}{\pi(S^{\star})}\nu^{s^{\star}-\vert S\vert}[{\sqrt{2}}(p/s^{\star})^{-M_0}]^{s^{\star}},
\end{equation}
where $\nu = (\frac{2\gamma}{\alpha+\gamma})^{-1/2}$, $M_0=\frac{K\alpha}{12  d_2}$.
\end{proof}

Set
\begin{equation}
B_{\epsilon_n}=\{\beta \in \mathbb{R}^p: \parallel X(\beta -\beta^{\star})\parallel_2^2 \ge\epsilon_n\},
\end{equation}
where $\epsilon_n$ is a positive sequence of constants to be specified later.\\
\\
\\
\textbf{Proof of Theorem \ref{theorem4}:}\\
Since
\begin{equation}
\Pi^n(B_{M\epsilon_n}) = \sum_{S}\pi^n(B_{M\epsilon_n}\vert S) \pi^n(S) \le \sum_{\vert S\vert \le Cs^{\star}}\pi^n(B_{M\epsilon_n}\vert S) \pi^n(S) + \sum_{\vert S\vert > Cs^{\star}} \pi^n(S),
\end{equation}
by Theorem \ref{theorem1} it suffices to show that
\begin{equation}
E_{\beta^{\star}} \sum_{\vert S\vert \le Cs^{\star}}\pi^n(B_{M\epsilon_n}\vert S) \pi^n(S) \preceq \exp ({-G_2 \epsilon_n} ).
\end{equation}

Let $\beta_{S+}$ be a $p$-vector  by augmenting $\beta_S$ with $\beta_j=0$ for all $j \in S^c$, and  $B_{M\epsilon_n}(S)$ be the set of all $\beta_S$ such that $\beta_{S+} \in B_n$, $\pi^n(B_{M\epsilon_n}\vert S)=\pi^n(B_{M\epsilon_n}(S)\vert S)$.\\
\\
%Since 
%\begin{displaymath}
%\begin{split}
%\parallel X(\beta_{S+} -\beta^{\star})\parallel_2^2= &\parallel (I-P_S)X(\beta_{S+} -\beta^{\star})\parallel_2^2+\parallel P_S X(\beta_{S+} -\beta^{\star})\parallel_2^2\\
%= & \parallel (I-P_S)X\beta^{\star}\parallel_2^2+\parallel P_S X(\beta_{S+} -\beta^{\star})\parallel_2^2
%\end{split}
%\end{displaymath}

%\begin{equation}
%\begin{split}
%B_{M\epsilon_n}(S) \subseteq & \{\beta \in \mathbb{R}^p: \parallel(I-P_S)X\beta^{\star}\parallel_2^2 \ge M\epsilon_n /2\} \\
%\cup & \{\beta \in \mathbb{R}^p: \parallel P_S X(\beta_{S+} -\beta^{\star})\parallel_2^2 \ge M\epsilon_n/2,\parallel(I-P_S)X\beta^{\star}\parallel_2^2 \le M\epsilon_n /2\}.
%\end{split}
%\end{equation}

Define 
\begin{displaymath}
Q_{n,S}=\{\beta^{\star} \in \mathbb{R}^p: \parallel(I-P_S)X\beta^{\star}\parallel_2^2 \ge M\epsilon_n /2\} .
\end{displaymath}
Then
%\begin{displaymath}
%Q_{S,2}^n=\{\beta \in \mathbb{R}^p: \parallel P_S X(\beta_{S+} -\beta^{\star})\parallel_2^2 \ge M\epsilon_n/2,\parallel(I-P_S)X\beta^{\star}\parallel_2^2 \le M\epsilon_n /2\}.
%\end{displaymath}
\begin{equation}
\begin{split}
& \sum_{\vert S\vert \le Cs^{\star}}\pi^n(B_{M\epsilon_n}(S)\vert S) \pi^n(S) \\
=&  \sum_{S^{\star} \nsubseteq S, \vert S\vert \le Cs^{\star}}\pi^n(B_{M\epsilon_n}(S)\vert S)\pi^n(S)+ \sum_{S^{\star} \subseteq S, \vert S\vert \le Cs^{\star}}\pi^n(B_{M\epsilon_n}(S)\vert S)\pi^n(S)\\
& \le W_1+W_2+ W_3, 
\end{split}
\end{equation}
\noindent
where  $W_1= \sum_{S^{\star} \nsubseteq S, \vert S\vert \le Cs^{\star}}\pi^n(S)\mathbbm{1}_{Q_{n,S}}
$,  $W_2= \sum_{S^{\star} \nsubseteq S, \vert S\vert \le Cs^{\star}}\pi^n(B_{M\epsilon_n}(S)\vert S)\mathbbm{1}_{Q^c_{n,S}}$, and $W_3= \sum_{S^{\star} \subseteq S, \vert S\vert \le Cs^{\star}}\pi^n(B_{M\epsilon_n}(S)\vert S)$.
Thus it suffices to show that $E_{\beta^{\star}}W_i \preceq \exp({-G \epsilon_n})$, $i=1,2,3$ for some $G>0$.

For $W_1$, let $M>(2a+3) \frac{24 d_2}{\alpha}$. Then by Lemma \ref{lemma3}, we have $E_{\beta^{\star}}W_1 \preceq \exp{(- G_1 \epsilon_n)}$.

Next we  consider $W_2$. By (\ref{beta_S given S}) and generalized Holder's inequality, we have 
\begin{equation}
\begin{split}
& E_{\beta^{\star}}\pi^n(B_{M\epsilon_n}(S)\vert S)\\
=&(\frac{\alpha+ \gamma}{2 \pi})^{\vert S\vert/2}\vert X_S^TX_S\vert^{1/2}\frac{\Gamma(a_0+\alpha n/2+\vert S\vert/2)}{\Gamma(a_0+\alpha n/2)}\\
& \times \int_{B_{M\epsilon_n}(S)}\frac{\{ b_0 + \frac{\alpha}{2} \Vert y-\hat{y}_S \Vert ^2\}^{(a_0+\alpha n/{2})}}{\{ b_0 + \frac{\alpha}{2} \Vert y-\hat{y}_S \Vert ^2+ \frac{\gamma+\alpha}{2} \parallel X_S(\beta_S- \hat{\beta}_S)\parallel ^2 \}^{(a_0+\alpha n/{2}+\vert S\vert/2)}} d\beta_S \\
\le &(\frac{\alpha+ \gamma}{2 \pi})^{\vert S\vert/2}\vert X_S^TX_S\vert^{1/2}\frac{\Gamma(a_0+\alpha n/2+\vert S\vert/2)}{\Gamma(a_0+\alpha n/2)}\\
& \times \int_{B_{M\epsilon_n}(S)} [E_{\beta^{\star}} \frac{\{ b_0 + \frac{\alpha}{2} \Vert y-\hat{y}_S \Vert ^2\}^{(a_0+\alpha n/{2}-1/2)\cdot 2}}{\{ b_0 + \frac{\alpha}{2} \Vert y-\hat{y}_S \Vert ^2+ \frac{\gamma+\alpha}{2} \parallel X_S(\beta_S- \hat{\beta}_S)\parallel ^2 \}^{(a_0+\alpha n/{2}-1/2)\cdot 2}}]^{1/2} \\
& \cdot [E_{\beta^{\star}} \frac{1}{\{ b_0 + \frac{\alpha}{2} \Vert y-\hat{y}_S \Vert ^2+ \frac{\gamma+\alpha}{2} \parallel X_S(\beta_S- \hat{\beta}_S)\parallel ^2 \}^{(\vert S\vert/2+1/2)\cdot 4}}]^{1/4} \\
& \cdot [E_{\beta^{\star}}(b_0 + \frac{\alpha}{2} \Vert y-\hat{y}_S \Vert ^2)^{4/2}]^{1/4}\cdot d\beta_S.
\end{split}\label{W_2 S part}
\end{equation}

We now observe that 
\begin{displaymath}
\Vert y-\hat{y}_S \Vert ^2 \sim \sigma_0^2 \chi^2_{n-\vert S\vert}(\frac{\parallel(I-P_S)X\beta^{\star}\parallel_2^2}{\sigma_0^2})
\end{displaymath}
and 
\begin{displaymath}
\parallel X_S(\beta_S- \hat{\beta}_S)\parallel ^2 \vert \beta_S \sim \sigma_0^2 \chi^2_{\vert S\vert}(\frac{\parallel P_S X(\beta_{S+} -\beta^{\star})\parallel_2^2}{\sigma_0^2})\quad .
\end{displaymath}
Also
\begin{equation}
\begin{split}
\parallel X(\beta_{S+} -\beta^{\star})\parallel_2^2= &\parallel (I-P_S)X(\beta_{S+} -\beta^{\star})\parallel_2^2+\parallel P_S X(\beta_{S+} -\beta^{\star})\parallel_2^2\\
= & \parallel (I-P_S)X\beta^{\star}\parallel_2^2+\parallel P_S X(\beta_{S+} -\beta^{\star})\parallel_2^2 \quad.
\end{split}
\end{equation}
Hence by Lemmas \ref{lemma1} and  \ref{lemma2}, for ${\vert S\vert \le Cs^{\star}}$, on $Q^c_{n,S} \cap B_{M\epsilon_n}(S)$, we have 
\begin{equation}
\parallel P_S X(\beta_{S+} -\beta^{\star})\parallel_2^2 \ge M\epsilon_n /2  ,\quad
P(\Vert y-\hat{y}_S \Vert ^2 > 2\sigma_0^2 n) \le \exp(-n/10) \quad ,
\end{equation}
and
\begin{equation}
\begin{split}
& P(\parallel X_S(\beta_S- \hat{\beta}_S)\parallel ^2 \le  M \epsilon_n /4\vert \beta_S)\\
\le &
P(\parallel X_S(\beta_S- \hat{\beta}_S)\parallel ^2 \le \parallel P_S X(\beta_{S+} -\beta^{\star})\parallel_2^2/2\vert \beta_S)\\
\le & \frac{C^\prime \sigma_0^2}{\parallel P_S X(\beta_{S+} -\beta^{\star})\parallel_2^2} 
\exp\{-\frac{\parallel P_S X(\beta_{S+} -\beta^{\star})\parallel_2^2}{32\sigma_0^2}\}\\
\le &\frac{2 c_2 C^\prime}{M \epsilon_n} \exp\{-\frac{\parallel P_S X(\beta_{S+} -\beta^{\star})\parallel_2^2}{32\sigma_0^2}\} \le \frac{2 c_2 C^\prime}{M \epsilon_n} \exp\{-\frac{M \epsilon_n}{64\sigma_0^2}\},
\end{split}
\end{equation}
where $C^{\prime}$ is a constant.

Let
\begin{equation}
\begin{split}
&A=\{ y:\Vert y-\hat{y}_S \Vert ^2 \ge 2\sigma_0^2 n\},B_1=\{(y,\beta_S):\parallel X_S(\beta_S- \hat{\beta}_S)\parallel ^2 \le  M \epsilon_n /4\},\\
& B_2=\{(y,\beta_S):\parallel X_S(\beta_S- \hat{\beta}_S)\parallel ^2 \le \parallel P_S X(\beta_{S+} -\beta^{\star})\parallel_2^2/2\}.
\end{split}
\end{equation}
\noindent
Hence on $Q^c_{n,S} \cap B_{M\epsilon_n}(S)$, we have 
\begin{equation}
\begin{split}
& [E_{\beta^{\star}}(b_0 + \frac{\alpha}{2} \Vert y-\hat{y}_S \Vert ^2)^{4/2}]^{1/4}\\
= &[b_0^2+\alpha \sigma_0^2 b_0(n-\vert S\vert+\frac{\parallel(I-P_S)X\beta^{\star}\parallel_2^2}{\sigma_0^2})+\frac{\alpha^2 \sigma^4_0}{4}(2(n-\vert S\vert)\\
& +\frac{4\parallel(I-P_S)X\beta^{\star}\parallel_2^2}{\sigma_0^2}+(n-\vert S\vert+\frac{\parallel(I-P_S)X\beta^{\star}\parallel_2^2}{\sigma_0^2})^2)]^{1/4}\\
\preceq & 2 n^{1/2}. \label{holder part 1}
\end{split}
\end{equation}
\noindent
Also by $1+x \le \exp(x)$,
\begin{equation}
\begin{split}
& [E_{\beta^{\star}} \frac{\{ b_0 + \frac{\alpha}{2} \Vert y-\hat{y}_S \Vert ^2\}^{(a_0+\alpha n/{2}-1/2)\cdot 2}}{\{ b_0 + \frac{\alpha}{2} \Vert y-\hat{y}_S \Vert ^2+ \frac{\gamma+\alpha}{2} \parallel X_S(\beta_S- \hat{\beta}_S)\parallel ^2 \}^{(a_0+\alpha n/{2}-1/2)\cdot 2}}]^{1/2} \\
\le & [E_{\beta^{\star}} (\frac{ (b_0 + \alpha \sigma_0^2 n)\mathbbm{1}_{A^c \cap B_1^c}}{ b_0 + \alpha \sigma_0^2 n+ \frac{\gamma+\alpha}{8} M \epsilon_n})^{(a_0+\alpha n/{2}-1/2)\cdot 2}+E_{\beta^{\star}}\mathbbm{1}_{A \cup B_1}]^{1/2}\\
\le & [\exp\{-\frac{\alpha+\gamma}{8 d_2}M\epsilon_n\}+\exp \{-n/10\}+\frac{2 c_2 C^\prime}{M \epsilon_n} \exp\{-\frac{M \epsilon_n}{64\sigma_0^2}\}]^{1/2}\\
\preceq & \exp \{-\frac{\alpha}{128 d_2}M\epsilon_n \} \label{holder part 2}
\end{split}
\end{equation}
\noindent
and
\begin{equation}
\begin{split}
& [E_{\beta^{\star}} \frac{1}{\{ b_0 + \frac{\alpha}{2} \Vert y-\hat{y}_S \Vert ^2+ \frac{\gamma+\alpha}{2} \parallel X_S(\beta_S- \hat{\beta}_S)\parallel ^2 \}^{(\vert S\vert/2+1/2)\cdot 4}}]^{1/4} \\
\le & [E_{\beta^{\star}} \frac{\mathbbm{1}_{B_2^c}}{\{ b_0 + \frac{\gamma+\alpha}{2} \parallel X_S(\beta_S- \hat{\beta}_S)\parallel ^2 \}^{(\vert S\vert/2+1/2)\cdot 4}}+ E_{\beta^{\star}} \mathbbm{1}_{B_2}]^{1/4}\\
\le & [E_{\beta^{\star}} \frac{1}{\{ b_0 + \frac{\gamma+\alpha}{4} \parallel P_S X(\beta_{S+} -\beta^{\star})\parallel_2^2 \}^{(\vert S\vert/2+1/2)\cdot 4}} \\
  &+ \frac{2 c_2 C^\prime}{M \epsilon_n} \exp\{-\frac{\parallel P_S X(\beta_{S+} -\beta^{\star})\parallel_2^2}{32\sigma_0^2}\}]^{1/4}\\
\preceq & \frac{1}{\{ b_0 + \frac{\gamma+\alpha}{4} \parallel P_S X(\beta_{S+} -\beta^{\star})\parallel_2^2 \}^{(\vert S\vert/2+1/2)}}+ C^\prime \exp\{-\frac{\parallel P_S X(\beta_{S+} -\beta^{\star})\parallel_2^2}{128\sigma_0^2} \} .\label{holder part 3}
\end{split}
\end{equation}

Then by(\ref{W_2 S part}), (\ref{holder part 1}), (\ref{holder part 2}), (\ref{holder part 3}) and $\parallel P_S X(\beta_{S+} -\beta^{\star})\parallel_2^2 = \parallel X_{S}(\beta_S-(X_{S}^T X_{S})^{-1}X_{S}^T X \beta^{\star})\parallel_2^2 $, we have
\begin{equation}
\begin{split}
&E_{\beta^{\star}}W_2 \\
\preceq &  \sum_{S^{\star} \nsubseteq S, \vert S\vert \le Cs^{\star}}\lbrace(\frac{\alpha+ \gamma}{2 \pi})^{\vert S\vert/2}\vert X_S^TX_S\vert^{1/2}\frac{\Gamma(a_0+\alpha n/2+\vert S\vert/2)}{\Gamma(a_0+\alpha n/2)}\\
& \times \int_{R^{\vert S\vert}}\frac{2 n^{1/2} \exp \{-\frac{\alpha}{128 d_2}M\epsilon_n\}}{\{ b_0 + \frac{\gamma+\alpha}{4} \parallel X_{S}(\beta_S-(X_{S}^T X_{S})^{-1}X_{S}^T X \beta^{\star})\parallel_2^2 \}^{(\vert S\vert/2+1/2)}}d\beta_S \}\\
& +   \sum_{S^{\star} \nsubseteq S, \vert S\vert \le Cs^{\star}}\lbrace(\frac{\alpha+ \gamma}{2 \pi})^{\vert S\vert/2}\vert X_S^TX_S\vert^{1/2}\frac{\Gamma(a_0+\alpha n/2+\vert S\vert/2)}{\Gamma(a_0+\alpha n/2)}\\
& \times \int_{R^{\vert S\vert}}2 C^\prime n^{1/2} \exp \{-\frac{\alpha}{128 d_2}M\epsilon_n\} \exp\{-\frac{\parallel X_{S}(\beta_S-(X_{S}^T X_{S})^{-1}X_{S}^T X \beta^{\star})\parallel_2^2}{128\sigma_0^2} \}d\beta_S \}\\
= &  \sum_{S^{\star} \nsubseteq S, \vert S\vert \le Cs^{\star}}\frac{\Gamma(a_0+\alpha n/2+\vert S\vert/2)\Gamma(1/2)}{\Gamma(a_0+\alpha n/2)\Gamma((\vert S\vert+1)/2)}2^{\vert S\vert/2+1}(\frac{n}{b_0})^{1/2} \exp\{-\frac{\alpha}{128 d_2}M\epsilon_n\}\\
&+\sum_{S^{\star} \nsubseteq S, \vert S\vert \le Cs^{\star}}2 C^\prime n^{1/2}\frac{\Gamma(a_0+\alpha n/2+\vert S\vert/2)}{\Gamma(a_0+\alpha n/2)}(\alpha+\gamma)^{\vert S\vert/2} \exp \{-\frac{\alpha}{128 d_2}M\epsilon\}
\\
\preceq & Cs^{\star}{{p} \choose {Cs^{\star}}}[n^{C s^{\star}/2+1/2}+(1+\gamma)^{\vert S\vert/2}n^{1+C s^{\star}/2}] \exp \{-\frac{\alpha}{128 d_2}M\epsilon_n\} \preceq   \exp({-G^{\prime}\epsilon_n})
\end{split}
\end{equation}
for some $G^{\prime}>0$, when $M > \frac{256C d_2}{\alpha}$.

For $W_3$,  by (\ref{beta_S given S}) and  Holder's inequality, we have 
\begin{displaymath}
\begin{split}
& E_{\beta^{\star}}\pi^n(B_{M\epsilon_n}(S)\vert S)\\
=&(\frac{\alpha+ \gamma}{2 \pi})^{\vert S\vert/2}\vert X_S^TX_S\vert^{1/2}\frac{\Gamma(a_0+\alpha n/2+\vert S\vert/2)}{\Gamma(a_0+\alpha n/2)}\\
& \times \int_{B_{M\epsilon_n}(S)}\frac{\{ b_0 + \frac{\alpha}{2} \Vert y-\hat{y}_S \Vert ^2\}^{(a_0+\alpha n/{2})}}{\{ b_0 + \frac{\alpha}{2} \Vert y-\hat{y}_S \Vert ^2+ \frac{\gamma+\alpha}{2} \parallel X_S(\beta_S- \hat{\beta}_S)\parallel ^2 \}^{(a_0+\alpha n/{2}+\vert S\vert/2)}} d\beta_S \\
\le &(\frac{\alpha+ \gamma}{2 \pi})^{\vert S\vert/2}\vert X_S^TX_S\vert^{1/2}\frac{\Gamma(a_0+\alpha n/2+\vert S\vert/2)}{\Gamma(a_0+\alpha n/2)}\\
& \times \int_{B_{M\epsilon_n}(S)} [E_{\beta^{\star}} \frac{\{ b_0 + \frac{\alpha}{2} \Vert y-\hat{y}_S \Vert ^2\}^{(a_0+\alpha n/{2}-1/2)\cdot 2}}{\{ b_0 + \frac{\alpha}{2} \Vert y-\hat{y}_S \Vert ^2+ \frac{\gamma+\alpha}{2} \parallel X_S(\beta_S- \hat{\beta}_S)\parallel ^2 \}^{(a_0+\alpha n/{2}-1/2)\cdot 2}}]^{1/2} \\
& \cdot [E_{\beta^{\star}} \frac{1}{\{ b_0 + \frac{\alpha}{2} \Vert y-\hat{y}_S \Vert ^2+ \frac{\gamma+\alpha}{2} \parallel X_S(\beta_S- \hat{\beta}_S)\parallel ^2 \}^{(\vert S\vert/2+1/2)\cdot 4}}]^{1/4} \\
& \cdot [E_{\beta^{\star}}(b_0 + \frac{\alpha}{2} \Vert y-\hat{y}_S \Vert ^2)^{4/2}]^{1/4}\cdot d\beta_S.
\end{split}
\end{displaymath}

When $S^{\star} \subseteq S$, 
\begin{displaymath}
\Vert y-\hat{y}_S \Vert ^2 \sim \sigma_0^2 \chi^2_{n-\vert S\vert}
\end{displaymath}
and 
\begin{displaymath}
\parallel X_S(\beta_S- \hat{\beta}_S)\parallel ^2 \vert \beta_S\sim \sigma_0^2 \chi^2_{\vert S\vert}(\frac{\parallel X(\beta_{S+} -\beta^{\star})\parallel_2^2}{\sigma_0^2})\quad ,
\end{displaymath}
Hence by Lemmas \ref{lemma1} and \ref{lemma2}, for ${\vert S\vert \le Cs^{\star}}$, on $ B_{M\epsilon_n}(S)$, we have 
\begin{displaymath}
\parallel P_S X(\beta_{S+} -\beta^{\star})\parallel_2^2 \ge M\epsilon_n  , \quad
P(\Vert y-\hat{y}_S \Vert ^2 > 2\sigma_0^2 n) \le \exp(-n/10) \quad ,
\end{displaymath}
and
\begin{displaymath}
\begin{split}
& P(\parallel X_S(\beta_S- \hat{\beta}_S)\parallel ^2 \le  M \epsilon_n /4\vert \beta_S)\\
\le &
P(\parallel X_S(\beta_S- \hat{\beta}_S)\parallel ^2 \le \parallel P_S X(\beta_{S+} -\beta^{\star})\parallel_2^2/2\vert \beta_S)\\
\le & \frac{C^\prime \sigma_0^2}{\parallel P_S X(\beta_{S+} -\beta^{\star})\parallel_2^2} 
\exp\{-\frac{\parallel P_S X(\beta_{S+} -\beta^{\star})\parallel_2^2}{32\sigma_0^2}\}\\
\le &\frac{2 c_2 C^\prime}{M \epsilon_n} \exp\{-\frac{\parallel P_S X(\beta_{S+} -\beta^{\star})\parallel_2^2}{32\sigma_0^2}\} \le \frac{2 c_2 C^\prime}{M \epsilon_n} \exp\{-\frac{M \epsilon_n}{64\sigma_0^2}\},
\end{split}
\end{displaymath}

Recalling the definitions of $A$, $B_1$ and $B_2$, 
by the same technique used for in $E_{\beta^{\star}}W_2$, we can prove that when $M > \frac{256 C d_2}{\alpha}$, $E_{\beta^{\star}}W_3 \preceq \exp({-G \epsilon_n})$, for some $G>0$.

We prove the Theorem by taking  $M \ge \max \{(2a+3) \frac{24 d_2}{\alpha},\frac{256C d_2}{\alpha}\}$.\\
\\
\\
\textbf{Proof of Theorem \ref{bernstein}:}\\
Since
\begin{displaymath}
\pi^n(\beta)= \sum_{S}\pi^n(\beta_S\vert S)\pi^n(S).
\end{displaymath}
Hence by convexity of $H^2$  and $H^2 \le 2$, we have
\begin{displaymath}
\begin{split}
&H^2(\Pi^n,\{N_{s^{\star}}(\hat{\beta}_{S^{\star}},\frac{\sigma_0^2}{\alpha+\gamma}(X^T_{S^{\star}}X_{S^{\star}})^{-1})\bigotimes \delta_{0S^{\star c}}\})\\
=& H^2(\sum_{S}\pi^n(\beta_S\vert S)\pi^n(S),N_{s^{\star}}(\hat{\beta}_{S^{\star}},\frac{\sigma_0^2}{\alpha+\gamma}(X^T_{S^{\star}}X_{S^{\star}})^{-1})\bigotimes \delta_{0S^{\star c}})  \\
\le & \sum_{S} \pi^n(S) H^2(\pi^n(\beta_S\vert S),N_{s^{\star}}(\hat{\beta}_{S^{\star}},\frac{\sigma_0^2}{\alpha+\gamma}(X^T_{S^{\star}}X_{S^{\star}})^{-1})\bigotimes \delta_{0S^{\star c}})\\
\le & \sum_{S \neq S^{\star}} 2\pi^n(S)+ H^2(\pi^n(\beta_{S^\star}\vert S^{\star}),N_{s^{\star}}(\hat{\beta}_{S^{\star}},\frac{\sigma_0^2}{\alpha+\gamma}(X^T_{S^{\star}}X_{S^{\star}})^{-1})\bigotimes \delta_{0S^{\star c}})\\
= & \sum_{S \neq S^{\star}} 2(1-\pi^n(S^{\star}))+ H^2(\pi^n(\beta_{S^\star}\vert S^{\star}),N_{s^{\star}}(\hat{\beta}_{S^{\star}},\frac{\sigma_0^2}{\alpha+\gamma}(X^T_{S^{\star}}X_{S^{\star}})^{-1})\bigotimes \delta_{0S^{\star c}}).
\end{split}
\end{displaymath}
We also have $E_{\beta^{\star}}\pi^n(S^{\star})\to 1$ by dominated cobvergence theorem. So it suffices to show that 
\begin{displaymath}
E_{\beta^{\star}} H^2(\pi^n(\beta_{S^\star}\vert S^{\star}),N_{s^{\star}}(\hat{\beta}_{S^{\star}},\frac{\sigma_0^2}{\alpha+\gamma}(X^T_{S^{\star}}X_{S^{\star}})^{-1})\bigotimes \delta_{0S^{\star c}}) \to 0.
\end{displaymath}
This we prove by showing   expectation of the Hellinger affinity
\begin{equation}
E_{\beta^{\star}}\int_{R^{s^{\star}}} \sqrt{\pi^n({\beta}_{S^{\star}}\vert S^{\star}) \cdot N_{s^{\star}}({\beta}_{S^{\star}}\vert\hat{\beta}_{S^{\star}} , \frac{\sigma_0^2}{\alpha+\gamma}(X^T_{S^{\star}}X_{S^{\star}})^{-1})}  d\beta_{S^{\star}} \to 1.
\end{equation}

To this end, let  
\begin{displaymath}
Q=\{y: (n-s^{\star})- \sqrt{n-s^{\star}} \log(n-s^{\star}) \le \Vert y-\hat{y}_{S^{\star}} \Vert ^2/{\sigma_0^2}\le (n-s^{\star})+ \sqrt{n-s^{\star}} \log(n-s^{\star})\}.
\end{displaymath}
Then by $1+x \le \exp(x)$,
\begin{equation}
\begin{split}
& E_{\beta^{\star}}\int_{R^{s^{\star}}} \sqrt{\pi^n({\beta}_{S^{\star}}\vert S^{\star}) \cdot N_{s^{\star}}({\beta}_{S^{\star}}\vert\hat{\beta}_{S^{\star}} , \frac{\sigma_0^2}{\alpha+\gamma}(X^T_{S^{\star}}X_{S^{\star}})^{-1})}  d\beta_{S^{\star}}\\
\ge & {(\sigma_0^2)}^{-{s^{\star}}/4}(\frac{\alpha+ \gamma}{2 \pi})^{s^{\star}/2}\vert X_{S^{\star}}^TX_{S^{\star}}\vert^{1/2}\frac{\Gamma^{\frac{1}{2}}(a_0+\alpha n/2+{s^{\star}}/2)}{\Gamma^{\frac{1}{2}}(a_0+\alpha n/2)}\\
& \times E_{\beta^{\star}}\int_{R^{s^{\star}}} \mathbbm{1}_{Q} \cdot (b_0 + \frac{\alpha}{2} \Vert y-\hat{y}_{S^{\star}} \Vert ^2)^{-s^{\star}/4} \cdot \exp\{{-\frac{{(\alpha+\gamma)} \parallel X_{S^{\star}}(\beta_{S^{\star}}- \hat{\beta}_{S^{\star}})\parallel ^2}{4\sigma_0^2}}\}\\
& \cdot [1+\frac{{(\alpha+\gamma)} \parallel X_{S^{\star}}(\beta_{S^{\star}}- \hat{\beta}_{S^{\star}})\parallel ^2}{2b_0 + {\alpha} \Vert y-\hat{y}_{S^{\star}} \Vert ^2}]^{-(a_0+\alpha n/{2}+{s^{\star}}/2)/2}d\beta_{S^{\star}}\\
\ge & {(\sigma_0^2)}^{-{s^{\star}}/4}(\frac{\alpha+ \gamma}{2 \pi})^{s^{\star}/2}\vert X_{S^{\star}}^TX_{S^{\star}}\vert^{1/2}\frac{\Gamma^{\frac{1}{2}}(a_0+\alpha n/2+{s^{\star}}/2)}{\Gamma^{\frac{1}{2}}(a_0+\alpha n/2)}\\
& \times E_{\beta^{\star}}\int_{R^{s^{\star}}} [b_0 + \frac{\alpha \sigma_0^2}{2} ((n-s^{\star})+ \sqrt{n-s^{\star}} \log(n-s^{\star}) )]^{-s^{\star}/4} \\
& \cdot \exp\{{-\frac{{(\alpha+\gamma)} \parallel X_{S^{\star}}(\beta_{S^{\star}}- \hat{\beta}_{S^{\star}})\parallel ^2}{4\sigma_0^2}}\}\\
& \cdot \exp\{ - \frac{{(\alpha+\gamma)} \parallel X_{S^{\star}}(\beta_{S^{\star}}- \hat{\beta}_{S^{\star}})\parallel ^2}{4b_0 + 2{\alpha} \sigma_0^2 ((n-s^{\star})- \sqrt{n-s^{\star}} \log(n-s^{\star}))}(a_0+\alpha n/{2}+{s^{\star}}/2) \}d\beta_{S^{\star}}\\
\succeq & {(\sigma_0^2)}^{-{s^{\star}}/4}(\frac{\alpha+ \gamma}{2 \pi})^{s^{\star}/2}\vert X_{S^{\star}}^TX_{S^{\star}}\vert^{1/2}\frac{\Gamma^{\frac{1}{2}}(a_0+\alpha n/2+{s^{\star}}/2)}{\Gamma^{\frac{1}{2}}(a_0+\alpha n/2)} \\
& \cdot [\alpha \sigma^2_0 n /2+\alpha \sigma_0^2\sqrt{n} \log n /2]^{-{s^{\star}}/4} \\
& \cdot E_{\beta^{\star}}\int_{R^{s^{\star}}} \exp\{{-(\frac{1}{2}+\frac{\alpha n/2+\sqrt{n}/(2 \log n)}{\alpha(n-2\sqrt{n} \log n)})\frac{{(\alpha+\gamma)} \parallel X_{S^{\star}}(\beta_{S^{\star}}- \hat{\beta}_{S^{\star}})\parallel ^2}{2\sigma_0^2}}\}  d\beta_{S^{\star}}\\
= & \frac{\Gamma^{\frac{1}{2}}(a_0+\alpha n/2+{s^{\star}}/2)}{\Gamma^{\frac{1}{2}}(a_0+\alpha n/2)(\alpha n/2)^{{s^{\star}}/4}}[1+\frac{ \log n}{\alpha \sqrt{n}} ]^{-{s^{\star}}/4}(1+\frac{\sqrt{n}/(2 \log n)+\alpha \sqrt{n} \log n}{\alpha (n-2 \sqrt{n} \log n)})^{-{s^{\star}}/2} \\
\succeq & \sqrt{\frac{a_0+\alpha n /2}{\alpha n /2} \cdot \frac{a_0+\alpha n /2 +1}{\alpha n /2} \cdots \frac{a_0+\alpha n /2 +s^{\star}-1}{\alpha n /2}}  \\
\geq & (\frac{a_0+\alpha n /2}{\alpha n /2} )^{s^ \star /4} \to  1.
\end{split}
\end{equation}
\noindent
\\
\\
\textbf{Proof of Corollary \ref{uncertainty quantification}: }\\
Let $\psi = x^T {\beta}$. Similar to the proof of Theorem \ref{bernstein}  we have 
\begin{displaymath}
E_{\beta^{\star}}d_{TV}(\Pi^n_{\psi},N(x^T_{S^{\star}}\hat{\beta}_{S^{\star}},\frac{\sigma_0^2}{\alpha+\gamma}x^T_{S^{\star}}(X^T_{S^{\star}}X_{S^{\star}})^{-1}x_{S^{\star}})) \to 0 ,
\end{displaymath}
where $\Pi^n_{\psi}$ is the derived posterior distribution of  $\psi = x^T {\beta}$.  
%This convergence in total variation implies the posterior quantile $t_{\gamma}$ asymptotically agrees with the corresponding quantile for $N(x^T_{S^{\star}}\hat{\beta}_{S^{\star}},\frac{\sigma_0^2}{\alpha+\gamma}x^T_{S^{\star}}(X^T_{S^{\star}}%X_{S^{\star}})^{-1}x_{S^{\star}})$.
Hence $\frac{x^T_{S^{\star}}\hat{\beta}_{S^{\star}}-t_{\gamma}}{\sqrt{\frac{\sigma_0^2}{\alpha+\gamma}x^T_{S^{\star}}(X^T_{S^{\star}}X_{S^{\star}})^{-1}x_{S^{\star}}}}$   is asymptotically equals $\Phi^{-1}(\gamma)$. Then writing $Z$ as a $N(0,1)$ variable,
\begin{displaymath}
\begin{split}
P_{\beta^{\star}}(t_{\gamma} \ge x^T {\beta}^{\star})=& P_{\beta^{\star}}(\frac{x^T_{S^{\star}}\hat{\beta}_{S^{\star}}-x^T {\beta}^{\star}}{\sqrt{\frac{\sigma_0^2}{\alpha+\gamma}x^T_{S^{\star}}(X^T_{S^{\star}}X_{S^{\star}})^{-1}x_{S^{\star}}}} \ge \frac{x^T_{S^{\star}}\hat{\beta}_{S^{\star}}-t_{\gamma}}{\sqrt{\frac{\sigma_0^2}{\alpha+\gamma}x^T_{S^{\star}}(X^T_{S^{\star}}X_{S^{\star}})^{-1}x_{S^{\star}}}})\\
= & P_{\beta^{\star}}(Z \ge \frac{x^T_{S^{\star}}\hat{\beta}_{S^{\star}}-t_{\gamma}}{\sqrt{\frac{\sigma_0^2}{\alpha+\gamma}x^T_{S^{\star}}(X^T_{S^{\star}}X_{S^{\star}})^{-1}x_{S^{\star}}}}) \ge  1- \gamma + o(1), \qquad n \to \infty.
\end{split}
\end{displaymath}
\noindent

\section{Final Remarks}
The paper extends the work of Ryan Martin and his colleagues who proposed empirical prior for linear regression models. The contribution of this article is to extend their work for unknown error variance. The theoretical advancement is the derivation of new results related to model selection consistency , posterior contraction rates as well as  a new Bernstein von-Mises theorem in our framework. An important open question is whether similar results can be found in the set up of  \citep{Castillo:2015} who proposed Laplace priors for a linear regression problem with known error variance.

%%===========================================================================================%%
%% If you are submitting to one of the Nature Portfolio journals, using the eJP submission   %%
%% system, please include the references within the manuscript file itself. You may do this  %%
%% by copying the reference list from your .bbl file, paste it into the main manuscript .tex %%
%% file, and delete the associated \verb+\bibliography+ commands.                            %%
%%===========================================================================================%%

%%\bibliography{sn-bibliography}% common bib file

\begin{thebibliography}{8}
% BibTex style file: bmc-mathphys.bst (version 2.1), 2014-07-24
\ifx \bisbn   \undefined \def \bisbn  #1{ISBN #1}\fi
\ifx \binits  \undefined \def \binits#1{#1}\fi
\ifx \bauthor  \undefined \def \bauthor#1{#1}\fi
\ifx \batitle  \undefined \def \batitle#1{#1}\fi
\ifx \bjtitle  \undefined \def \bjtitle#1{#1}\fi
\ifx \bvolume  \undefined \def \bvolume#1{\textbf{#1}}\fi
\ifx \byear  \undefined \def \byear#1{#1}\fi
\ifx \bissue  \undefined \def \bissue#1{#1}\fi
\ifx \bfpage  \undefined \def \bfpage#1{#1}\fi
\ifx \blpage  \undefined \def \blpage #1{#1}\fi
\ifx \burl  \undefined \def \burl#1{\textsf{#1}}\fi
\ifx \doiurl  \undefined \def \doiurl#1{\url{https://doi.org/#1}}\fi
\ifx \betal  \undefined \def \betal{\textit{et al.}}\fi
\ifx \binstitute  \undefined \def \binstitute#1{#1}\fi
\ifx \binstitutionaled  \undefined \def \binstitutionaled#1{#1}\fi
\ifx \bctitle  \undefined \def \bctitle#1{#1}\fi
\ifx \beditor  \undefined \def \beditor#1{#1}\fi
\ifx \bpublisher  \undefined \def \bpublisher#1{#1}\fi
\ifx \bbtitle  \undefined \def \bbtitle#1{#1}\fi
\ifx \bedition  \undefined \def \bedition#1{#1}\fi
\ifx \bseriesno  \undefined \def \bseriesno#1{#1}\fi
\ifx \blocation  \undefined \def \blocation#1{#1}\fi
\ifx \bsertitle  \undefined \def \bsertitle#1{#1}\fi
\ifx \bsnm \undefined \def \bsnm#1{#1}\fi
\ifx \bsuffix \undefined \def \bsuffix#1{#1}\fi
\ifx \bparticle \undefined \def \bparticle#1{#1}\fi
\ifx \barticle \undefined \def \barticle#1{#1}\fi
\bibcommenthead
\ifx \bconfdate \undefined \def \bconfdate #1{#1}\fi
\ifx \botherref \undefined \def \botherref #1{#1}\fi
\ifx \url \undefined \def \url#1{\textsf{#1}}\fi
\ifx \bchapter \undefined \def \bchapter#1{#1}\fi
\ifx \bbook \undefined \def \bbook#1{#1}\fi
\ifx \bcomment \undefined \def \bcomment#1{#1}\fi
\ifx \oauthor \undefined \def \oauthor#1{#1}\fi
\ifx \citeauthoryear \undefined \def \citeauthoryear#1{#1}\fi
\ifx \endbibitem  \undefined \def \endbibitem {}\fi
\ifx \bconflocation  \undefined \def \bconflocation#1{#1}\fi
\ifx \arxivurl  \undefined \def \arxivurl#1{\textsf{#1}}\fi
\csname PreBibitemsHook\endcsname

%%% 1
\bibitem{Martin:2017}
\begin{barticle}
\bauthor{\bsnm{Martin}, \binits{R.}},
\bauthor{\bsnm{Mess}, \binits{R.}},
\bauthor{\bsnm{Walker}, \binits{S.}}:
\batitle{Empirical bayes posterior concentration in sparse high-dimensional
  linear models}.
\bjtitle{Bernoulli}
\bvolume{23}(\bissue{3}),
\bfpage{1822}--\blpage{1847}
(\byear{2017})
\end{barticle}
\endbibitem

%%% 2
\bibitem{Martin:2014}
\begin{barticle}
\bauthor{\bsnm{Martin}, \binits{R.}},
\bauthor{\bsnm{Walker}, \binits{S.}}:
\batitle{Asymptotically minimax empirical bayes estimation of a sparse normal
  mean vector.}
\bjtitle{Electron. J. Stat}
\bvolume{8}(\bissue{2}),
\bfpage{2188}--\blpage{2206}
(\byear{2014})
\end{barticle}
\endbibitem

%%% 3
\bibitem{Martin:2020}
\begin{barticle}
\bauthor{\bsnm{Martin}, \binits{R.}},
\bauthor{\bsnm{Ning}, \binits{B.}}:
\batitle{Empirical priors and coverage of posterior credible sets in a sparse
  normal mean model}.
\bjtitle{The Indian Journal of Statistics}
\bvolume{82-A},
\bfpage{477}--\blpage{498}
(\byear{2020})
\end{barticle}
\endbibitem

%%% 4
\bibitem{Martin:2020.1}
\begin{barticle}
\bauthor{\bsnm{Martin}, \binits{R.}},
\bauthor{\bsnm{Tang}, \binits{Y.}}:
\batitle{Empirical priors for prediction in sparse high-dimensional linear
  regression}.
\bjtitle{Journal of Machine Learning Research}
\bvolume{21},
\bfpage{1}--\blpage{30}
(\byear{2020})
\end{barticle}
\endbibitem

%%% 5
\bibitem{Cao:2020}
\begin{barticle}
\bauthor{\bsnm{Cao}, \binits{X.}},
\bauthor{\bsnm{Khare}, \binits{K.}},
\bauthor{\bsnm{Ghosh}, \binits{M.}}:
\batitle{High-dimensional posterior consistency for hierarchical non-local
  priors in regression}.
\bjtitle{Bayesian Anal}
\bvolume{15}(\bissue{1}),
\bfpage{241}--\blpage{262}
(\byear{2020})
\end{barticle}
\endbibitem

%%% 6
\bibitem{Shin:2019}
\begin{barticle}
\bauthor{\bsnm{Shin}, \binits{M.}},
\bauthor{\bsnm{Bhattacharya}, \binits{A.}},
\bauthor{\bsnm{Johnson}, \binits{V.}}:
\batitle{Functional horseshoe priors for subspace shrinkage}.
\bjtitle{Journal of the American Statistical Association}
\bvolume{115}(\bissue{532}),
\bfpage{1784}--\blpage{1797}
(\byear{2019})
\end{barticle}
\endbibitem

%%% 7
\bibitem{Castillo:2015}
\begin{barticle}
\bauthor{\bsnm{Castillo}, \binits{I.}},
\bauthor{\bsnm{Schmidt-Hieber}, \binits{J.}},
\bauthor{\bsnm{{van der vaart}}, \binits{A.}}:
\batitle{Bayesian linear regression with sparse priors}.
\bjtitle{Ann. Statist}
\bvolume{43}(\bissue{5}),
\bfpage{1986}--\blpage{2018}
(\byear{2015})
\end{barticle}
\endbibitem

%%% 8
\bibitem{Arias-Castro:2014}
\begin{barticle}
\bauthor{\bsnm{Arias-Castro}, \binits{E.}},
\bauthor{\bsnm{Lounici}, \binits{K.}}:
\batitle{Estimation and variable selection with exponential weights}.
\bjtitle{Electron. J. Stat}
\bvolume{8}(\bissue{1}),
\bfpage{328}--\blpage{354}
(\byear{2014})
\end{barticle}
\endbibitem

\end{thebibliography}
%% if required, the content of .bbl file can be included here once bbl is generated
%% BioMed_Central_Bib_Style_v1.01

%% Default %%
%\input sn-sample-bib.tex

\end{document}